\documentclass[11pt, a4paper]{amsart}
\usepackage{amssymb, amsmath, amsfonts, mathrsfs, amsthm}
\usepackage[utf8]{inputenc}
\usepackage{lmodern}
\usepackage[T1]{fontenc}
\usepackage[a4paper,includeheadfoot,margin=2.54cm]{geometry}
\usepackage[matrix, arrow, curve]{xy}

\DeclareMathOperator{\Exc}{\mathrm{Exc}}

\DeclareMathOperator{\PP}{\mathbb{P}}

\DeclareMathOperator{\dl}{\mathrm{dl}}
\DeclareMathOperator{\vdl}{\mathrm{vdl}}

\DeclareMathOperator{\Bir}{\mathrm{Bir}}
\DeclareMathOperator{\Aut}{\mathrm{Aut}}
\DeclareMathOperator{\Psaut}{\mathrm{Psaut}}

\numberwithin{equation}{section}

\begin{document} 

\title{A remark on polycyclic groups of birational automorphisms}
\date{}
\author{Aleksei Golota} 

\newtheorem{theorem}{Theorem}[section] 
\newtheorem*{ttheorem}{Theorem}
\newtheorem{lemma}[theorem]{Lemma}
\newtheorem{proposition}[theorem]{Proposition}
\newtheorem*{conjecture}{Conjecture}
\newtheorem{corollary}[theorem]{Corollary}

{\theoremstyle{remark}
\newtheorem{remark}[theorem]{Remark}
\newtheorem{example}[theorem]{Example}
\newtheorem{notation}[theorem]{Notation}
\newtheorem{question}[theorem]{Question}
}

\theoremstyle{definition}
\newtheorem{construction}[theorem]{Construction}
\newtheorem{definition}[theorem]{Definition}

\begin{abstract} Let $X$ be an algebraic variety of dimension $d$ over an algebraically closed field $k$ of zero characteristic. Suppose that $G$ is a virtually polycyclic subgroup in the group $\Bir(X)$ of birational automorphisms of $X$. We show that the virtual derived length of $G$ does not exceed $2d+1$. Moreover, if $X$ is a surface, the bound can be improved to $3$, and this value is optimal.
\end{abstract}

\maketitle

\section{Introduction}

We work over an algebraically closed field $k$ of zero characteristic. Let $X$ be an algebraic variety, that is, an integral and separated scheme of finite type over $k$. We denote by $\Aut(X)$ (respectively, $\Bir(X)$) the group of biregular (respectively, birational) automorphisms of $X$. 

In this note we are interested in {\em finitely generated solvable} subgroups of birational automorphism groups. Recall that a group $G$ is {\em solvable} if there exists a finite sequence of subgroups \begin{equation}\label{series} 
G = G_0 \rhd G_1 \rhd G_2 \rhd \cdots \rhd G_{k-1} \rhd G_{k} = \{1\},
\end{equation}
such that the successive quotients $G_i/G_{i+1}$ are abelian groups. Equivalently, the {\em derived series} of $G$, defined inductively by $G^{(0)} = G$ and $G^{(i)} = [G^{(i-1)}, G^{(i-1)}]$ has to terminate at $G^{(k)} = \{1\}$. The length of the derived series is the minimal $k$ such that the group $G$ admits a series \eqref{series} of length $k$. This number is called the {\em derived length} of $G$ and denoted by $\dl(G)$. More generally, a group is {\em virtually solvable} if it contains a solvable subgroup of finite index. For a virtually solvable group $G$ one may define $$\vdl(G) = \min\{\dl(H) \mid H \subset G \mbox{ is a solvable finite index subgroup}\}.$$

For various groups associated to geometric objects, such as automorphism and birational automorphism groups of varieties, one may consider the quantity $$\psi(K) = \sup\{\dl(G) \mid G \subset K \mbox{ is a solvable subgroup}\}.$$
In \cite[Proposition 3.14]{FP18} J.-P. Furter and P.-M. Poloni have shown that $\psi(\Aut(\mathbb{A}^2)) = 5$. For birational automorphism groups, the first such result, due to J. D\'eserti \cite[Th\'eor\`eme 1.1]{Des07}, is for arbitrary nilpotent subgroups of the Cremona group $\mathrm{Cr}_2(\mathbb{C})$.

\begin{theorem}[J. D\'eserti]\label{deserti} Let $N$ be a nilpotent subgroup of the Cremona group $\mathrm{Cr}_2(\mathbb{C}) = \Bir(\PP^2_{\mathbb{C}})$. Then the virtual derived length of $N$ is at most two.
\end{theorem}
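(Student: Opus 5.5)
The plan follows D\'eserti's strategy: split according to the dynamical behaviour of the elements of $N$, using the action of $\mathrm{Cr}_2(\mathbb{C})$ on the infinite-dimensional hyperbolic space $\mathbb{H}^\infty$ (the Picard--Manin space). We may assume $N$ is not virtually abelian, otherwise there is nothing to prove.

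\emph{Step 1: reduce to a setting with an invariant structure.} Every element $f$ of $\mathrm{Cr}_2(\mathbb{C})$ is elliptic, parabolic (de Jonqui\`eres or Halphen twist) or loxodromic.

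\emph{Step 2: the loxodromic case.} Suppose $N$ contains a loxodromic element $f$. Since $N$ is nilpotent, it contains no free subgroup of rank $2$. By the ping-pong/Tits alternative for groups acting on hyperbolic spaces (Cantat), $N$ must then fix the pair of endpoints $\{\xi^+,\xi^-\}$ of the axis of $f$. The subgroup fixing each endpoint has index at most $2$. It maps to $\mathbb{R}$ via the translation length, and the kernel of that map normalizes $f$ and acts with bounded orbits. By Cantat's description of such groups, the centralizer of a loxodromic map is virtually cyclic up to a finite group. This yields $N$ virtually abelian, so $\vdl(N)\le 1$.

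\emph{Step 3: no loxodromic elements, but some parabolic element.} Then $N$ fixes the unique boundary point of that parabolic element, since a nilpotent group normalizes enough to force this. Consequently $N$ preserves either a rational fibration (the de Jonqui\`eres case) or an elliptic fibration (the Halphen case), up to conjugacy.
\begin{itemize}
\item In the Halphen case, the group preserving the elliptic fibration is virtually abelian-by-finite. One argues via the action on the base, which must be finite, and on the Mordell--Weil group of translations, which is abelian. This again gives $\vdl(N)\le 1$ or $2$.
\item In the de Jonqui\`eres case, $N\subset \mathrm{PGL}_2(\mathbb{C}(x))\rtimes \mathrm{PGL}_2(\mathbb{C})$. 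Project $N$ to $\mathrm{PGL}_2(\mathbb{C})$: the image is a nilpotent linear group, hence virtually abelian. The kernel is a nilpotent subgroup of $\mathrm{PGL}_2(\mathbb{C}(x))$, hence virtually abelian. Combining gives $\vdl\le 2$ only if done carefully: one must pass to a finite-index subgroup whose image is abelian and whose intersection with the kernel is abelian, giving a metabelian group.
\end{itemize}

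\emph{Step 4: all elements elliptic.} For a finitely generated $N$ consisting of elliptic elements, the orbits are bounded (by a nilpotent-group argument, or by Cantat's theorem on bounded orbits). So $N$ is conjugate to a group of automorphisms of a rational surface acting on its Picard lattice through a finite group. A finite-index subgroup therefore lies in $\Aut^0(S)$, which is a linear algebraic group. A nilpotent subgroup of a linear algebraic group is virtually contained in a connected nilpotent group, that is, a torus times a unipotent group. For surfaces, $\Aut^0$ is a subgroup of $\mathrm{PGL}_3$ or of $\Aut$ of a Hirzebruch surface, and one checks the derived length there is at most $2$. Non-finitely generated $N$ are handled by taking unions of finitely generated subgroups, provided the bound is uniform.

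\emph{Main obstacle.} I expect the hardest step to be the de Jonqui\`eres case. There a nilpotent subgroup of $\mathrm{PGL}_2(\mathbb{C}(x))\rtimes\mathrm{PGL}_2(\mathbb{C})$ must be shown to be virtually metabelian, not merely of derived length $\le 3$ or $4$. My first attempt would be to show that the intersection with $\mathrm{PGL}_2(\mathbb{C}(x))$ is virtually contained in a torus or a unipotent subgroup, on which the image in $\mathrm{PGL}_2(\mathbb{C})$ acts through an abelian group. Then the semidirect product of an abelian group by an abelian group gives $\vdl\le 2$.
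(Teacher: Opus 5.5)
The paper does not prove this statement; it is quoted from D\'eserti \cite{Des07}. Your outline therefore has to stand on its own. For finitely generated $N$ the loxodromic/parabolic/elliptic trichotomy is a reasonable roadmap, but as written it does not prove the theorem for arbitrary nilpotent $N$.

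\textbf{Main gap: infinitely generated subgroups.} Your only device here is to take unions of finitely generated subgroups ``provided the bound is uniform''. A uniform bound on $\vdl$ of the finitely generated subgroups does not bound $\vdl$ of the union, because the index of the metabelian subgroups is not controlled. For example, let $P$ be a finite $2$-group of derived length $3$, such as $C_2\wr C_2\wr C_2$. Then $\bigoplus_{i\in\mathbb{N}}P$ is nilpotent and all its finitely generated subgroups are finite (so have $\vdl=0$). Yet every finite-index subgroup has derived length $3$: for all but finitely many $i$, the factor $P_i$ maps to the finite quotient by the normal core with the same abelian image as some $P_j$, $j\neq i$, and the elements $(a,b)\in P_i\times P_j$ in the kernel then project onto all of $P_i$. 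So you need uniform control of the index (for torsion subgroups, this is the kind of input the Jordan property of $\mathrm{Cr}_2(\mathbb{C})$ gives), or arguments that work directly for infinitely generated groups.

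The case distinction also breaks down outside the finitely generated setting. The abelian group $\{(x,y)\mapsto(x,y+p(x)) : p\in\mathbb{C}[x]\}$ consists of elliptic elements. Its degrees are unbounded, so its orbits in $\mathbb{H}^\infty$ are unbounded. It is therefore covered neither by Step 3 (it has no parabolic element) nor by the bounded-orbit argument of Step 4.

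\textbf{The de Jonqui\`eres case.} You flag this as the main obstacle, but it is not carried out. Your suggested fix (a kernel virtually in a torus or unipotent group, on which the image acts abelianly) only relocates the same index problem. For finitely generated $N$ this step is actually routine:
\begin{enumerate}
\item $N$ is noetherian, so $K=N\cap\mathrm{PGL}_2(\mathbb{C}(x))$ is finitely generated and virtually abelian.
\item The image of $N$ in $\mathrm{PGL}_2(\mathbb{C})$ is finitely generated and virtually abelian.
\item The argument of Lemma \ref{virt} then gives $\vdl(N)\le 2$.
\end{enumerate}
The real difficulty is the infinitely generated case. There an extension of a virtually abelian group by a virtually abelian group need not be virtually metabelian (see the remark after Lemma \ref{virt}). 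Nilpotency of $N$ must be used essentially at this point, and the proposal does not do so.

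\textbf{Smaller, fixable points.}
\begin{itemize}
\item In Step 2, it is $f$ that normalizes the elliptic kernel, not the other way round.
\item Also in Step 2, virtual cyclicity of the centralizer of $f$ says nothing about $N$ until you show that a finite-index subgroup of $N$ centralizes a power of $f$.
\item Absence of free subgroups alone is not enough in Step 2: a hyperbolic monomial map together with torus translations generates $\mathbb{Z}^2\rtimes_M\mathbb{Z}$, which is not virtually abelian. It is nilpotency that must rule out an infinite elliptic kernel.
\item In Step 3, ``a nilpotent group normalizes enough'' should be replaced by the classification of group actions on hyperbolic spaces. With no loxodromic elements, $N$ either has bounded orbits or fixes a unique point at infinity, which must then be the fixed point of the parabolic element.
\end{itemize}
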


For solvable subgroups of $\mathrm{Cr}_2(\mathbb{C})$ the known upper bound on the derived length is 8 (see \cite[Theorem 1.10]{Ure21}). Also, for subgroups in automorphism groups of {\em projective} varieties (and compact Kaehler manifolds) $X$ there are known bounds on virtual invariants in terms of $\dim(X)$ and the Kodaira dimension (see \cite{DLOZ26}). The only known analogue of Theorem \ref{abboud} regarding varieties of arbitrary dimension was obtained by M. Abboud \cite[Theorem B]{Abb23} using the $p$-adic method, developed earlier by S. Cantat and J. Xie in \cite{CX18}.

\begin{theorem}[M. Abboud]\label{abboud} Let $X$ be a quasi-projective algebraic variety over an algebraically closed field $k$ of zero characteristic. If $H$ is a finitely generated nilpotent subgroup of $\Aut(X)$, then the virtual derived length of $H$ is bounded from above by $\dim(X)$. Moreover, this bound is sharp.
\end{theorem}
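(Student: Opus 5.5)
The statement is Abboud's theorem, quoted from \cite{Abb23}. My plan follows the $p$-adic method of \cite{CX18}: replace the finitely generated group by a Lie algebra of analytic vector fields on a $p$-adic polydisk of dimension $d=\dim(X)$, and bound the derived length there.

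\emph{Step 1: reduction to a $p$-adic flow.} Let $H$ be generated by $f_1,\dots,f_m$. Take a finitely generated $\mathbb{Z}$-algebra $R\subset k$ over which $X$, the $f_i$ and the $f_i^{-1}$ are defined. By Cassels-type embedding results, choose a prime $p$ and an embedding $R\hookrightarrow\mathbb{Z}_p$ such that a model $\mathcal{X}$ has good reduction. The group $H$ acts on the finite set $\mathcal{X}(\mathbb{F}_p)$, and also on the finitely many residue data to first order. Pass to a finite index subgroup $H_1$ that fixes a residue polydisk $U\simeq\mathbb{Z}_p^d$ and acts on it by maps congruent to the identity modulo $p$ (modulo $p^2$ if $p=2$). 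The Bell--Poonen / Cantat--Xie interpolation lemma then says each $g\in H_1$ is the time-one map of a $p$-adic analytic flow $\Phi_g\colon\mathbb{Z}_p\times U\to U$, with $\Phi_g(t,\cdot)=\exp(tv_g)$ for an analytic vector field $v_g$ on $U$.

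\emph{Step 2: passing to a Lie algebra.} The set $\{g\in\Aut(U)\colon g\equiv\mathrm{id}\bmod p\}$ behaves like a pro-$p$ analytic group whose logarithm lands in the analytic vector fields. Let $\mathfrak g$ be the Lie algebra over $\mathbb{Q}_p$ generated by the $v_g$, $g\in H_1$. Since $H_1$ is finitely generated nilpotent, I would use the Mal'cev correspondence (after passing to a torsion-free finite index subgroup) to argue that $\mathfrak g$ is finite-dimensional and nilpotent. The map $\log$ should then intertwine the derived series of $H_1$ with that of $\mathfrak g$, giving $\dl(H_1)\le\dl(\mathfrak g)$. Because $U$ is Zariski dense in $X$, no information is lost: any identity holding on $U$ holds on $X$.

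\emph{Step 3: the key bound.} I expect this to be the main obstacle: showing that a finite-dimensional nilpotent Lie algebra $\mathfrak g$ of analytic vector fields on a connected $d$-dimensional $p$-adic analytic manifold satisfies $\dl(\mathfrak g)\le d$. I would argue by induction on $d$.

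The centre $\mathfrak z\neq0$. Let $r\ge1$ be the generic rank of the distribution spanned by $\mathfrak z$, and restrict to a dense open subset where this distribution is a regular foliation $\mathcal F$ of rank $r$. Since $\mathfrak z$ is an ideal, $\mathfrak g$ preserves $\mathcal F$. Hence $\mathfrak g$ acts on a local leaf space of dimension $d-r$, giving a homomorphism $\mathfrak g\to\mathfrak g'$. By induction, $\dl(\mathfrak g')\le d-r$.

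The kernel $\mathfrak k$ consists of fields tangent to $\mathcal F$. In local coordinates where $\mathfrak z$ is spanned by $\partial_{x_1},\dots,\partial_{x_r}$ (the fields commute), a field in $\mathfrak k$ commuting with $\mathfrak z$ has coefficients independent of $x_1,\dots,x_r$. Any two such fields then commute, so $\mathfrak k$ is abelian. This gives $\dl(\mathfrak g)\le d-r+1\le d$. The delicate point is that not every element of $\mathfrak k$ need commute with $\mathfrak z$ a priori. I would handle this by replacing $\mathfrak z$ with a maximal abelian ideal containing it, or by working with the upper central series, and check that the argument still goes through.

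\emph{Step 4: sharpness.} For sharpness I would take $X=\mathbb{A}^d$ and the group generated by suitable triangular automorphisms $(x_1,\,x_2+a(x_1),\,x_3+b(x_1,x_2),\dots)$ together with translations. Choosing the polynomials appropriately, for instance as iterated monomials, the derived series drops exactly one coordinate at each step. This yields a finitely generated nilpotent group whose finite index subgroups all have derived length $d$.
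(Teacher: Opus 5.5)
The paper gives no proof of this statement; it is quoted from Abboud's work (Theorem B there), and sharpness is only referred to in Remark \ref{autopt}. Your outline follows the $p$-adic route the paper attributes to Abboud, but the genuine gap is in Step 2, not Step 3.

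\textbf{Step 2.} The Mal'cev correspondence concerns the abstract group $H_1$. It yields a rational nilpotent Lie algebra of dimension $h(H_1)$, with group law given by the truncated Baker--Campbell--Hausdorff series. It says nothing about the fields $v_g$, which live in the infinite-dimensional Lie algebra of analytic vector fields on $U$. The congruence subgroup of analytic diffeomorphisms of $U$ is not a finite-dimensional $p$-adic Lie group, so ``behaves like a pro-$p$ analytic group'' supplies no link.

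Both the nilpotency of $\mathfrak g$ and the inequality $\dl(H_1)\le\dl(\mathfrak g)$ need the identity $v_{gh}=\mathrm{BCH}(v_g,v_h)$, with the series converging among analytic vector fields, for $g,h$ in a deep enough congruence subgroup. You can get it by letting $g$ act by $g^*$ on the Tate algebra $\mathbb{Z}_p\langle x_1,\dots,x_d\rangle$. If $g\equiv\mathrm{id}$ modulo $p^c$, then $\|g^*-\mathrm{id}\|\le|p|^c$. Hence $\log g^*$ converges to a continuous derivation, i.e.\ an analytic vector field, and BCH converges in the Banach algebra of bounded operators.

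With this in hand, nilpotency needs no Mal'cev theory. Suppose $H_1$ has class $c$. The logarithm of the iterated commutator of $\exp(t_1v_{g_1}),\dots,\exp(t_{c+1}v_{g_{c+1}})$ is a convergent power series on $\mathbb{Z}_p^{c+1}$. It vanishes on the dense subset $\mathbb{Z}^{c+1}$, hence identically, and its $t_1\cdots t_{c+1}$-coefficient is $[v_{g_1},[v_{g_2},\dots,v_{g_{c+1}}]]$. So all $(c+1)$-fold brackets of the $v_g$ vanish, and $\mathfrak g$ is nilpotent. The same BCH identity gives $v_g\in\mathfrak g^{(i)}$ for $g\in H_1^{(i)}$. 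Since $g\mapsto v_g$ is injective, this yields $\dl(H_1)\le\dl(\mathfrak g)$.

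\textbf{Step 3.} This step is correct as you set it up, and the ``delicate point'' does not exist. Since $\mathfrak z$ is the centre of $\mathfrak g$, every $Y\in\mathfrak g$ commutes with $\partial_{x_1},\dots,\partial_{x_r}$, so all coefficients of $Y$ are independent of $x_1,\dots,x_r$. That is exactly what makes projection to $x_{r+1},\dots,x_d$ a Lie algebra homomorphism. Its kernel consists of fields $\sum_{i\le r}a_i(x_{r+1},\dots,x_d)\partial_{x_i}$, which commute pairwise; a single nonzero central field ($r=1$) already suffices. Replacing $\mathfrak z$ by a maximal abelian ideal, as you suggest, would destroy precisely this property. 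In the $p$-adic setting, replace ``dense open subset'' by a small polydisk around a point where the central field does not vanish. Restriction of power series convergent on $U$ to that polydisk is injective, and the $p$-adic flow-box theorem does the straightening.

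\textbf{Step 4.} This is only a sketch. You need a concrete unipotent algebraic group of triangular automorphisms of $\mathbb{A}^d$ of (weighted-)bounded degree with derived length exactly $d$. Then take a finitely generated Zariski-dense subgroup $\Gamma$ of it; $\Gamma$ is nilpotent because it lies in a unipotent group. Finite-index subgroups of $\Gamma$ stay Zariski dense, and the Zariski closure of $[\Gamma,\Gamma]$ is the commutator subgroup of the closure of $\Gamma$. Hence every finite-index subgroup of $\Gamma$ has derived length $d$.
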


It is natural to try to generalize Theorem \ref{abboud} in two directions: first, to consider subgroups in $\Bir(X)$ instead of $\Aut(X)$, and second, to pass from finitely generated nilpotent to finitely generated solvable groups, or at least to some appropriate subclass of solvable groups. An important class of finitely generated solvable groups is that of polycyclic groups. These can be characterized as finitely generated solvable subgroups with finiteness conditions for subgroups (see Proposition \ref{polycycequiv} below). If $G \subset \Bir(X)$ is a virtually polycyclic subgroup, we are interested in establishing a bound on the virtual derived length of $G$ in terms of the dimension of $X$.

The main result of the present note is the following generalization of Theorem \ref{abboud}.

\begin{theorem}\label{main}  Let $X$ be an algebraic variety over an algebraically closed field $k$ of zero characteristic. Suppose that $G \subset \Bir(X)$ is a virtually polycyclic subgroup. Then the virtual derived length of $G$ is bounded from above by $2\dim(X) + 1$.
\end{theorem}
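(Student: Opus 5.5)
The plan is to reduce to a finitely generated linear-algebraic statement via the $p$-adic method of Cantat--Xie and Abboud, combined with a structural fact about virtually polycyclic groups: a virtually polycyclic group $G$ has a finite index subgroup $G_0$ whose derived subgroup $[G_0,G_0]$ is nilpotent. This is Mal'cev's theorem, since polycyclic groups embed in $\mathrm{GL}_n(\mathbb{Z})$ and triangularizable subgroups have unipotent commutators. Hence $\vdl(G)\le 1+\vdl(N)$ for a finitely generated nilpotent group $N=[G_0,G_0]$. It then suffices to show that a finitely generated nilpotent subgroup $N\subset\Bir(X)$ has $\vdl(N)\le 2\dim X$.

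The first step is to spread out. Since $N$ and $X$ are defined by finitely many equations, I choose a finitely generated $\mathbb{Z}$-algebra $R$ over which $X$ and the generators of $N$, together with their inverses, are defined. Then I embed $R$ into $\mathbb{Z}_p$ for a suitable prime $p$, using the Lech/Cassels embedding theorem as in \cite{CX18}, chosen so that the reduction mod $p$ is good.

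Next I pass to a finite-index subgroup of $N$ acting trivially modulo $p$, or $p^2$, on a $p$-adic polydisk $\mathcal{U}\cong\mathbb{Z}_p^d$ of points, where $d=\dim X$. The difficulty is that birational maps have indeterminacy, so one must pick a polydisk avoiding the indeterminacy and exceptional loci of all elements of $N$, and it must be invariant under $N$. This is the main obstacle. For $\Aut(X)$ Abboud simply uses the whole set $X(\mathbb{Z}_p)$. For $\Bir(X)$ I would try the following: take the residue disk of a $\mathbb{F}_p$-point that lies outside the reductions of the finitely many bad loci of the generators. Then argue that, after passing to a finite index subgroup stabilizing this residue class, the generators restrict to analytic automorphisms of the disk, with $p$-adic Jacobians invertible. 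Since composites of such maps stay analytic on the disk, the whole group acts there.

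With this in hand, Bell--Poonen interpolation shows that each element $g$ close to the identity embeds in a $p$-adic analytic flow. The map $g\mapsto\log g$ then realizes the finite index subgroup as a subgroup of a $\mathbb{Q}_p$-Lie algebra of analytic vector fields on $\mathcal{U}$. The nilpotent subgroup generates a nilpotent Lie algebra $\mathfrak{n}$ of analytic vector fields on a $d$-dimensional disk, and the derived length of the group is bounded by that of $\mathfrak{n}$.

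The final step is a bound on the derived length of nilpotent, or more generally solvable, Lie algebras of analytic vector fields on a $d$-dimensional manifold. Abboud obtains the bound $d$ for nilpotent algebras using a generic-rank argument. Here I only aim for $2d$, which is more robust. Look at the generic rank of $\mathfrak{n}^{(i)}$ acting on $\mathcal{U}$. Each step of the derived series either drops the rank, which can happen at most $d$ times, or keeps it constant. In the latter case the algebra acts as a finite-dimensional algebra over the field of invariant meromorphic functions, and an abelian step follows, costing at most one more derivation per rank level. This gives $\dl(\mathfrak{n})\le 2d$. Together with the $+1$ from Mal'cev, it yields $\vdl(G)\le 2\dim X+1$.
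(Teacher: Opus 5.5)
\textbf{The gap is the invariant polydisk.} The step you flag yourself, finding a $p$-adic polydisk on which a finite-index subgroup of $N$ acts by analytic diffeomorphisms, is not closed by your proposed fix.

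The reduction of a birational map is only a partial map on $X(\mathbb{F}_p)$. So $N$ does not act on residue classes, and ``the finite-index subgroup stabilizing this residue class'' is not defined.

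Choosing $\bar x$ off the bad loci of the generators only makes each generator $s$ analytic on the residue disk $U_{\bar x}$, carrying it onto $U_{\bar s(\bar x)}$. But $\bar s(\bar x)$ can lie on the indeterminacy or exceptional locus of another generator $t$, and then $ts$ is not analytic on $U_{\bar x}$. So ``composites of such maps stay analytic on the disk'' presupposes that the disk is invariant, which is exactly what had to be shown.

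What you actually need is a finite set $F\subset X(\overline{\mathbb{F}}_p)$ that avoids the bad loci of all generators and their inverses and is mapped into itself by each of them. In other words, you need a finite orbit of the partial action that never meets indeterminacy. For a single map such a cycle can be produced with Hrushovski's twisted Lang--Weil estimate. For several (even commuting) birational maps, that argument does not carry over, and your proposal offers no substitute. Avoiding a finite list of bad loci is not enough, since the exceptional loci of elements of $N$ are in general infinitely many; this is what $\nu^1>0$ records.

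A sanity check: the rest of Abboud's argument is local on the polydisk. If your step worked, it would give $\vdl(N)\le d$ and hence $\vdl(G)\le d+1$ in every dimension. That is stronger than what the paper proves (compare Remark~\ref{optimal}).

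\textbf{What the paper does instead.} The missing idea is to remove the birational pathology \emph{before} any $p$-adic input, at the cost of $d$ extra derived-length steps.
The paper lets $N$ act on the Lonjou--Urech median graphs $\mathcal{C}^l$, whose translation length is $2\nu^{l+1}$. By Theorem~\ref{conner1}, the elements with $\nu^{l+1}=0$ form a normal subgroup containing the derived subgroup. Proposition~\ref{fixedpt} turns ``every element is elliptic'' into a global fixed vertex, i.e.\ pseudo-regularization in one more codimension.
Iterating over $l=0,\dots,d-1$ yields a subgroup $H_d\supseteq N^{(d)}$ that is regularizable on a quasi-projective variety. Only then is Theorem~\ref{abboud} applied as a black box; for automorphisms the invariant polydisk comes for free, since $N$ permutes the $\mathbb{F}_p$-points of a model. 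So the count $2d$ is $d$ regularization steps plus $d$ from Abboud, not a rank-counting bound for Lie algebras of vector fields as in your last step.

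A minor point: $\vdl(G)\le 1+\vdl([G_0,G_0])$ is not immediate, because a finite-index solvable subgroup of $[G_0,G_0]$ need not be normal in $G_0$. You need an argument like Lemma~\ref{virt}.
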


The idea of the proof is as follows: \begin{itemize} \item apply the structure theorem for polycyclic groups (Theorem \ref{maltsev}) in order to reduce the question to the case of finitely generated nilpotent groups; \item use the criterion (due to A. Lonjou and Ch. Urech \cite{LU21}) of regularization and pseudo-regularization in codimension $l$ for subgroups $G \subset \Bir(X)$ in terms of fixed points of actions on certain median graphs; \item use the known results on stable seminorms on nilpotent and polycyclic groups (from \cite{GS91, Con98, Con00, Gen22}) in order to prove (pseudo-)regularization for certain subgroups of the group $G$ in question, and thus reduce the question to Theorem \ref{abboud}.\end{itemize}

The notion of a stable norm, or a translation length function on a group (Definition \ref{trlength}) is convenient tool to study various ``rigidity'' questions in geometric group theory, such as existence of fixed points for subgroups of a given group. They have been applied to the study of birational automorphism groups, primarily in the surface case, see e. g. \cite{BF19} and \cite{Lam26}. The translation length functions considered here are quite special. In fact, the results we need (Theorems \ref{conner1} and \ref{conner2}) can be expressed in terms of the so-called relative (FW) property for subgroups. However, we hope that a more general setting will be useful for studying more general functions of this kind, such as (symmetrizations of) logarithms of dynamical degrees (see \cite[Section 10]{Ye25}).

The optimality of the bound $2\dim(X) + 1$, at least for some values of $\dim(X)$, remains an open question. However, in the case of surfaces we are able to establish the stronger (and optimal) upper bound, using the same techniques. 

\begin{theorem}\label{main2} Let $S$ be an algebraic surface over an algebraically closed field $k$ of zero characteristic. Suppose that $G \subset \Bir(S)$ is a virtually polycyclic subgroup. Then the virtual derived length of $G$ does not exceed $3$. Moreover, this bound is optimal.
\end{theorem}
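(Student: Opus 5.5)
By Mal'cev's structure theorem (Theorem \ref{maltsev}), after passing to a finite index subgroup we may assume that $G$ is polycyclic and has a normal nilpotent subgroup $N$ with $G/N$ finitely generated free abelian; we may take $N$ to be the Fitting subgroup. Then $\dl(G) \le \dl(N) + 1$, and more generally $\vdl(G) \le \vdl(N') + 1$ for suitable finite index subgroups $N' \subset N$ that are normal in a finite index subgroup of $G$. For the upper bound it therefore suffices to show that every finitely generated nilpotent subgroup $N$ of $\Bir(S)$ that is normalized by a polycyclic $G$ has $\vdl(N) \le 2$.

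\textbf{Upper bound.} I would apply the Lonjou--Urech criterion to the action of $N$ on the median (CAT(0) cube complex) graph attached to $S$, and use the results on stable norms (Theorems \ref{conner1} and \ref{conner2}). These say that a finitely generated nilpotent group, and also the commutator part $[G,G]$ (which is contained in $N$) of a polycyclic group, has relative property (FW) in the relevant sense. Consequently elements of $[G,G]$, or of its finite index subgroup, have vanishing translation length, so the subgroup has a fixed point. Hence a finite index subgroup of $N$ (or of $[G,G]$) is regularizable: it is conjugate to a subgroup of $\Aut(S')$ for some surface $S'$ birational to $S$.

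Theorem \ref{abboud} then applies. It requires $S'$ to be quasi-projective, which holds since every surface is quasi-projective. It gives $\vdl \le 2$ for finitely generated nilpotent subgroups of $\Aut(S')$, so $\vdl(N) \le 2$ and $\vdl(G) \le 3$. The point that distinguishes surfaces from the general $2d+1$ bound is that pseudo-regularization in dimension 2 is actual regularization, since there are no codimension-two indeterminacies to worry about beyond points. In higher dimension one only obtains pseudo-automorphisms, which costs an extra factor.

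\textbf{Main obstacle.} I expect the hardest step to be ensuring that the regularized subgroup is large enough, i.e.\ that one step of abelianization absorbs the non-regularizable part. My first attempt would be to regularize the derived subgroup $[G,G]$, which is nilpotent up to finite index for polycyclic $G$ (Mal'cev), instead of $N$; this gives $\vdl(G)\le 1+\vdl([G,G]) \le 3$ directly.

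\textbf{Optimality.} I would exhibit an example in $\Aut(\mathbb{A}^2) \subset \Bir(\mathbb{A}^2)$. Take the Heisenberg-type nilpotent group of dimension-two automorphisms, namely the group generated by $(x,y)\mapsto(x+1,y)$, $(x,y)\mapsto(x,y+1)$ and $(x,y)\mapsto(x,y+x)$; it has derived length 2 in every finite index subgroup. Extend it by the diagonal hyperbolic action $(x,y)\mapsto(\lambda x,\lambda^{-1}y)$. This does not normalize integer translations, so instead I would use a semidirect product $\mathbb{Z}^2 \rtimes_A \mathbb{Z}$ with $A\in SL_2(\mathbb{Z})$ hyperbolic, acting by affine maps, together with a monomial or Heisenberg enlargement. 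A cleaner candidate: let $G$ be generated by $(x,y)\mapsto(x,y+p(x))$ for $p$ in a $\mathbb{Z}[t^{\pm1}]$-lattice and $(x,y)\mapsto(x+1,y)$, twisted by a hyperbolic automorphism. Then I would check that every finite index subgroup has third derived subgroup trivial but second derived subgroup nontrivial.
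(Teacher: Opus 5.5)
\textbf{Gap in the upper bound.} Your final plan is essentially the paper's: pass to a finite-index subgroup $H$, show $[H,H]$ is nilpotent and regularizable, then apply Theorem \ref{abboud} and Lemma \ref{virt}. But the step that carries all the weight is unjustified.

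Theorems \ref{conner1} and \ref{conner2} do not say that commutators of a polycyclic group have zero translation length:
\begin{itemize}
\item Theorem \ref{conner1} concerns nilpotent groups and only gives $\sqrt{N'}\subset I(N,\tau)$. It kills the commutators of the nilpotent part $N$, not those of $G$.
\item Theorem \ref{conner2} needs a \emph{norm} that is positive on every element of infinite order. To use it you would have to quotient by $\{\tau=0\}$, and in a polycyclic group this set need not be a subgroup.
\end{itemize}
For example, let $G=\mathbb{Z}^2\rtimes\langle t\rangle$, with $t$ swapping the coordinates, act orientation-preservingly on the square grid. Then $t$ and $t\cdot(1,-1)$ are elliptic, but their quotient $(1,-1)$ is a translation. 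Moreover $[G,G]$ contains the translation $(-1,1)$, so you must pass to a finite-index subgroup before taking commutators.

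Likewise, the fixed-point argument cannot be applied to $N$ or to any finite-index subgroup of it. Take $N=\langle h\rangle$ with $h$ a H\'enon map, which acts loxodromically on the blow-up complex. If you run your argument honestly with Conner applied to $N$, you get $N'\subset I(N,\nu)$, hence $\vdl(N)\le 3$ and only $\vdl(G)\le 4$ (cf.\ Remark \ref{cremona2}).

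\textbf{The missing idea: Genevois's theorem.} Theorem \ref{genevois} says that a polycyclic group acting on a median graph has a finite-index subgroup $H$ whose periodic elements form a normal subgroup $E$ with $H/E$ free abelian. The paper proceeds as follows:
\begin{itemize}
\item It applies this to the action of $G$ on the blow-up complex $\mathcal{C}(S)$ of Remark \ref{blowup}.
\item It intersects $E$ with the nilpotent subgroup from Theorem \ref{maltsev}, after passing to characteristic finite-index subgroups. This yields a nilpotent normal subgroup with finitely generated abelian quotient, all of whose elements are elliptic.
\item Proposition \ref{fixedpt} gives a common fixed vertex, hence regularization on a projective surface.
\item Theorem \ref{abboud} and Lemma \ref{virt} then give the bound $3$.
\end{itemize}
Your $[H,H]$ version works verbatim once this is inserted, since $[H,H]\subset E$.

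\textbf{Why surfaces are special.} Your explanation is off. A fixed vertex of $\mathcal{C}^0(S)$ only conjugates the group into $\Psaut^1$, i.e.\ into maps that may still be undefined at finitely many points. The gain from $5$ to $3$ comes from two things. First, the single complex $\mathcal{C}(S)$ suffices, and its fixed vertices give projective models, so the quasi-projectivity needed for Theorem \ref{abboud} is automatic (it is not true that every surface is quasi-projective). Second, Genevois's theorem lets one abelian quotient of the whole group absorb the loxodromic part.

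\textbf{Optimality.} Your candidates are not verified. A working example, in the spirit of Remark \ref{autopt}: take $K=\mathbb{Q}(\sqrt2)\subset k$ and $\lambda=1+\sqrt2$. The maps $(x,y)\mapsto(\lambda^n x+\alpha,\,y+\beta+\gamma x)$ with $n\in\mathbb{Z}$ and $\alpha,\beta,\gamma\in\mathcal{O}_K$ form a polycyclic subgroup of $\Aut(\mathbb{A}^2)$. Its derived subgroup $G'$ lies in the unipotent part $U=\{n=0\}$, and $U'$ consists of $y$-translations, so $G'''=1$. In any finite-index subgroup, take a power of the hyperbolic element and form its commutators with an $x$-translation and with a shear $(x,y)\mapsto(x,y+\gamma x)$. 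These two commutators do not commute, so the second derived subgroup is nontrivial.
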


\begin{remark}\label{autopt} For a virtually polycyclic subgroup $G \subset \Aut(X)$, where $X$ is a $d$-dimensional algebraic variety, the upper bound on $\vdl(G)$ is equal to $d+1$. This bound is optimal in every dimension. In fact, according to \cite[Lemma 3.2]{FP18} the derived length of the Jonqui\`eres subgroup $\mathcal{B}_d \subset \Aut(\mathbb{A}^d)$ is equal to $d+1$. The examples of polycyclic subgroups of virtual derived length $d+1$ can be constructed in this group analogously to \cite[Subsection 4.4]{Abb23}.
\end{remark}

The structure of the paper is as follows. In section 2 we recall a few general definitions and results regarding seminorms and translation length function on groups. Also, in this section we briefly describe the construction of certain median graphs \cite{LU21, Lon25} necessary for the regularization criterion. In Section 3 we recall the basic definitions and results on polycyclic groups and on translation length functions on them. Finally, section 4 is devoted to the proofs of the main results (Theorems \ref{main} and \ref{main2}).

\textbf{Acknowledgements.} The author is grateful to C. Shramov and A. Kuznetsova for many interesting and stimulating discussions. Support from the Basic Research Program of HSE University is gratefully acknowledged (HSE-BR-2025-060).

\section{Seminorms and translation length functions on groups}

\begin{definition} \label{norm} Let $G$ be a group. A {\em seminorm} on $G$ is a function $||.|| \colon G \to \mathbb{R}_{\geqslant 0}$ satisfying the following properties: \begin{enumerate} \item $||e_G|| = 0$, where $e_G \in G$ is the neutral element; \item $||g\cdot h|| \leqslant ||g|| + ||h|| \quad \forall g,h \in G$; \item $||g|| = ||g^{-1}|| \quad \forall g \in G$. \end{enumerate} A seminorm $||.|| \colon G \to \mathbb{R}_{\geqslant 0}$ is a {\em norm} if $||g|| = 0$ implies $g = e_G$. A {\em pseudometric} on a group $G$ is a function $d \colon G\times G \to \mathbb{R}_{\geqslant 0}$ such that \begin{enumerate} \item $d(e_G, e_G) = 0$; \item $d(g, h) = d(h, g) \quad \forall g, h \in G$; \item $d(g, f) \leqslant d(g, h) + d(h, f) \quad \forall g, h, f \in G$. \end{enumerate}

Observe that a (semi)norm on $G$ determines a left-invariant (pseudo)metric $d \colon G\times G \to \mathbb{R}_{\geqslant 0}$ by the rule $d(g, h) = ||h^{-1}g||$.
\end{definition}

\begin{definition} \label{trlength} Let $G$ be a group endowed with a (semi)norm $||.||$. For $g \in G$ the number $$\tau(g) = \limsup_{n \to \infty}\frac{||g^n||}{n}$$ is called the {\em stable (semi)norm} of $g$. Obviously, if $g \in G$ is an element of finite order, then $\tau(g) = 0$. An element $g \in G$ of infinite order such that $\tau(g) = 0$ is called {\em distorted} with respect to the (semi)norm $||.||$.
\end{definition}

\begin{remark}\label{fekete} Note that the sequence of non-negative real numbers $\{||g^n||, n \in \mathbb{N}\}$ is subadditive, that is, $||g^{m+k}|| \leqslant ||g^m|| + ||g^k||$. The well-known Fekete lemma (see e. g. \cite[Lemma I.4.1]{Lam26}) implies that the upper limit in the definition of $\tau(g)$ is actually a limit: $$\tau(g) = \limsup_{n \to \infty}\frac{||g^n||}{n} = \lim_{n \to \infty}\frac{||g^n||}{n} = \inf_{n \in \mathbb{N}}\frac{||g^n||}{n}.$$
\end{remark}

The most important examples of norms on finitely generated groups are given by the word norms associated to finite generating subsets.

\begin{example}[Word norms] Let $G$ be a finitely generated group. Fix a finite generating subset $S = \{s_1, \ldots, s_n\}$ of $G$. For every element $g \in G$ define $||g||_S$ to be the length of the shortest word on the alphabet $S^{\pm 1}$ representing the element $g$. The resulting function is clearly a norm on $G$, and the corresponding left-invariant metric $d_S$ is the {\em word metric} on the Cayley graph of $G$ with respect to $S$.
\end{example}

The other fundamental example comes from metric geometry. The so-called translation length functions are related to fixed-point properties for groups of isometries.

\begin{example}[Translation lengths for actions by isometries] Let $(M, d)$ be a metric space. Suppose that a group $G$ acts on $M$ faithfully by isometries of the metric $d$. For $g \in G$ the {\em displacement function} $d_g \colon X \to \mathbb{R}_{\geqslant 0}$ is defined by $d_g(x) = d(x, gx)$. The {\em translation length} of $g$ is the number $$|g| = \inf\{d_g(x) \mid x \in M\}.$$ The {\em minimal set} $\mathrm{Min}(g)$ of $g$ is the set of points where $d_g$ attains its infimum; if $\mathrm{Min}(g) \neq \emptyset$ the isometry $g$ is called {\em semisimple}.
If $g \in G$ is a semisimple isometry, then one has $$|g| = \tau(g) = \limsup_{n \to \infty}\frac{d(x, g^nx)}{n},$$ where the upper limit is independent of $x \in M$ (see \cite[Chapter II.6]{BH99}).
\end{example}

In what follows we use the names ``stable (semi-)norm'' and ``translation length function'' interchangeably for functions $\tau \colon G \to \mathbb{R}_{\geqslant 0}$ as in Definition \ref{trlength}. In the next proposition we list some properties of these functions that follow directly from the definitions. For the proofs we refer to \cite[Appendix]{GS91}.

\begin{proposition} \label{properties} Let $G$ be a group endowed with a seminorm $||.||$, and let $g, h \in G$. Then the following statements are true. \begin{enumerate} \item $0 \leqslant \tau(g) \leqslant \frac{1}{n}||g^n||$ for all $n \in \mathbb{N}$; \item $\tau(g) = \tau(ghg^{-1})$; \item $\tau(x^n) = |n|\tau(x)$ for all $n \in \mathbb{Z}$; \item If $gh = hg$ then $\tau(gh) \leqslant \tau(g) + \tau(h)$.\end{enumerate}
\end{proposition}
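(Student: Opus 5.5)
We prove the four items in order, using only Definition \ref{norm} and the Fekete identity of Remark \ref{fekete}. The only nontrivial input is that $\tau(g)$ is a genuine limit, equal to $\inf_n \|g^n\|/n$. Everything else comes from subadditivity, symmetry and elementary manipulations of powers.

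\emph{Item (1).} By Remark \ref{fekete}, $\tau(g)=\inf_{n}\|g^n\|/n$. This infimum is at most each term $\frac1n\|g^n\|$. It is also non-negative, since every term is.

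\emph{Item (2).} We have $(ghg^{-1})^n = gh^ng^{-1}$. Subadditivity together with $\|g^{-1}\|=\|g\|$ gives
\[
\|gh^ng^{-1}\| \leqslant \|h^n\| + 2\|g\|.
\]
Dividing by $n$ and letting $n\to\infty$ yields $\tau(ghg^{-1})\leqslant \tau(h)$. Since $h = g^{-1}(ghg^{-1})g$, the same argument with $g$ replaced by $g^{-1}$ gives the reverse inequality. Note that (2) as stated compares $\tau(g)$ with $\tau(ghg^{-1})$. This must be a misprint for $\tau(h)=\tau(ghg^{-1})$, because $\tau(g)$ and $\tau(ghg^{-1})$ are unrelated in general; we prove the conjugation-invariant version.

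\emph{Item (3).} Write $x^0=e$ and use $\|e\|=0$, so the case $n=0$ is immediate. For $n<0$, axiom (3) gives $\|x^{nm}\|=\|x^{-nm}\|$, so $\tau(x^n)=\tau(x^{-n})$ and we may assume $n>0$. Then $\|(x^n)^m\|/m = n\cdot \|x^{nm}\|/(nm)$. Since $\|x^k\|/k$ converges as $k\to\infty$, its subsequence along $k=nm$ tends to $\tau(x)$, so $\tau(x^n)=n\,\tau(x)$.

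\emph{Item (4).} If $gh=hg$ then $(gh)^n=g^nh^n$. Hence
\[
\|(gh)^n\|\leqslant \|g^n\|+\|h^n\|,
\]
and dividing by $n$ and passing to the limit (all three limits exist) gives $\tau(gh)\leqslant\tau(g)+\tau(h)$.

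None of these steps is a real obstacle. The point needing the most care is item (3): using that $\tau$ is an honest limit is what makes the passage to a subsequence legitimate.
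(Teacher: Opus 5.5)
Your proof is correct and is the direct verification from subadditivity, symmetry of the seminorm and Fekete's lemma; the paper gives no argument of its own, saying only that these properties follow directly from the definitions and deferring to the appendix of Gersten--Short. You are also right that item (2) as printed is false (e.g.\ $G=\mathbb{Z}$ with the absolute value, $g=0$, $h=1$ gives $\tau(g)=0\neq 1=\tau(ghg^{-1})$), and that it should read as conjugation invariance $\tau(h)=\tau(ghg^{-1})$, which is what you prove.
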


\begin{remark}\label{length} Some authors reserve the name ``length functions'' for a more general class of functions $l \colon G \to \mathbb{R}_{\geqslant 0}$ that satisfy properties (2), (3) and (4) of Proposition \ref{properties}. These functions are not necessarily obtained as translation length functions associated to a seminorm on $G$. We refer to \cite{Ye25} for numerous examples of such functions on groups.
\end{remark}

The following definitions are due to G. Conner \cite[Definition 2.6]{Con98}.

\begin{definition}\label{discrete} A group $G$ is called \begin{itemize} 
\item {\em translation proper}, if there exists a norm $||.||$ on $G$ such that for every infinite order element $g \in G$ one has $\tau(g) > 0$;
\item {\em translation discrete}, if there exists a norm $||.||$ on $G$ such that the set of values $\{\tau(g)\}$ where $g$ is a non-torsion element of $G$ is bounded away from $0$.
\end{itemize}
\end{definition}

Following \cite[Definition 3.3]{Con98}, we also define the seminorm induced on the quotient group $G/H$ by a seminorm $||.||$ on $G$. 

\begin{definition}\label{induced} Let $G$ be a group endowed with a seminorm $||.||$ and let $H \subset G$ be a normal subgroup. The {\em induced seminorm} on the quotient group $G/H$ is defined by $$||gH||_{G/H} = \inf_{x \in gH}||x||.$$ It is clear from this definition that $$\tau_{G}(g) \geqslant \tau_{G/H}(\pi(g))$$ for every $g \in G$, where $\pi \colon G \to G/H$ is the quotient homomorphism. 
\end{definition}

\subsection{Translation lengths on $\Bir(X)$ from actions on median graphs}

Let $X$ be an algebraic variety. In this subsection we recall the notions of regularization and pseudo-regularization in codimension $l$ (where $0 \leqslant l \leqslant \dim(X)-1$) for elements and subgroups of $\Bir(X)$. Following \cite{LU21, Lon25}, we state the criteria for regularization and pseudo-regularization in codimension $l$ in terms of fixed point properties for actions on median graphs (or, equivalently, $\mathrm{CAT}(0)$ cube complexes) $\mathcal{C}^l(X)$.

Recall that if $f \colon X \dasharrow Y$ is a birational map of algebraic varieties, the {\em exceptional set} of $f$ is the subset of points $x \in X$ such that $f$ is not a local isomorphism in any neghborhood of $x$.

\begin{definition}\label{psaut} A birational map $f \colon X \dasharrow Y$ between algebraic varieties is a {\em pseudo-isomorphism in codimension $l$} if the exceptional sets of $f$ and $f^{-1}$ are of codimension $> l$ in $X$ and $Y$, respectively. For an algebraic variety $X$ the group of pseudo-automorphisms in codimension $l$ is denoted by $\Psaut^l(X)$. One has a sequence of subgroups $$\Aut(X) = \Psaut^d(X) \subset \Psaut^{d-1}(X) \subset \cdots \subset \Psaut^1(X) \subset \Psaut^0(X) = \Bir(X),$$ where $d$ is the dimension of $X$. Usually, the group $\Psaut^1(X)$ is denoted simply by $\Psaut(X)$ and called the group of pseudo-automorphisms of $X$. 
\end{definition}

\begin{definition}\label{psreg} A birational automorphism $f \in \Bir(X)$ (respectively, a subgroup $G \subset \Bir(X)$) is called {\em pseudo-regularizable in codimension $l$} if there exists an algebraic variety $Y$ and a birational map $\varphi \colon Y \dasharrow X$ such that $\varphi^{-1} \circ f \circ \varphi \in \Psaut^{l+1}(Y)$ (respectively, $\varphi^{-1} \circ g \circ \varphi \subset \Psaut^{l+1}(Y)$ for every $g \in G$).
\end{definition}

The next lemma \cite[Lemma 2.10]{LU21} says that a regularizable subgroup can always be regularized on a quasi-projective variety.

\begin{lemma}\label{quasipr} Let $X$ be an algebraic variety and let $G \subset \Aut(X)$ be an arbitrary subgroup. Then there exists a quasi-projective variety $Y$ and a birational map $\varphi \colon Y \dasharrow X$ such that $\varphi^{-1} \circ G \circ \varphi \subset \Aut(Y)$.
\end{lemma}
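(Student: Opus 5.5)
The plan is to exploit the fact that $X$ is separated and of finite type, so it is covered by finitely many affine opens, and to produce a quasi-projective $G$-invariant model birational to $X$. Working directly with the whole $X$ is not an option, since $G$ need not preserve any single affine chart. I would therefore proceed in two steps: first pass to the normalization, then find a $G$-stable open subset of a normal model that carries a $G$-linearized ample line bundle, or at least a $G$-invariant quasi-projective open dense subset.

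\textbf{Step 1.} Replace $X$ by its normalization $\nu\colon \tilde X\to X$. Every automorphism of $X$ lifts uniquely to $\tilde X$ by the universal property of normalization, and $\nu$ is birational. We may therefore assume that $X$ is normal.

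\textbf{Step 2.} On a normal variety, I would use the classical result (Sumihiro-type, or simply Kleiman's criterion together with the Weil-divisor argument) that any point lies in an open subset that is quasi-projective. More robustly, I would take the union $U$ of all open quasi-projective subsets of $X$ that are maximal in a suitable sense. Instead, the cleaner route is the following. Let $V\subset X$ be a nonempty affine open subset, and let $U=\bigcup_{g\in G} g(V)$. Then $U$ is a $G$-invariant open subset of $X$. Since $X$ is noetherian, $U$ equals a finite union $g_1(V)\cup\dots\cup g_m(V)$. Each complement $D_i=X\setminus g_i(V)$ has pure codimension one on the normal variety $X$: the complement of an affine open subset in a normal (even just separated) variety is of pure codimension one, since affineness of $V$ forces the boundary to be a divisor by Hartogs' theorem. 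The divisor $D=\sum_i D_i$ restricted to $U$ is then an effective Weil divisor whose support meets no point of $U$ simultaneously in all the $D_i$. I would show that a multiple of $D$ (or of $\sum_{g}g^*D$ taken over the finite orbit data) is Cartier and ample on $U$ after possibly shrinking $U$ to its smooth locus, which is still $G$-invariant and dense. On the smooth locus every Weil divisor is Cartier, and the sections defining $D_i$ produce an affine cover $\{g_i(V)\}$ given by nonvanishing loci of sections of $\mathcal{O}(D)$. This gives ampleness, so $Y=U_{\mathrm{sm}}$ is quasi-projective.

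\textbf{Step 3.} Take $\varphi\colon Y\hookrightarrow X$ to be the open immersion composed with normalization. Because $Y$ is $G$-invariant open, the conjugates $\varphi^{-1}\circ g\circ\varphi$ are honest automorphisms of $Y$.

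The main obstacle is Step 2. One issue is that $G$ may be infinite, so $\bigcup g(V)$ must be reduced to a finite subcover; noetherianity handles this. The harder point is ampleness of $D$ on $U$: one must check that the sections $s_i\in H^0(U,\mathcal{O}(D))$ with zero locus $D_i$ exist and cut out the affine sets $g_i(V)\cap U$. Here I would use that $\mathcal{O}(D)|_{g_i(V)}$ is trivialized by the canonical section of $D_i$ on the complement, so that the standard criterion, namely that $U$ is covered by affine loci $U_{s}$, yields ampleness. I would need to double-check that each $D_i$ is indeed of pure codimension one, possibly after passing to the smooth locus first, which I would therefore do before choosing $V$.
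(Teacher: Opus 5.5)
\textbf{There is a genuine gap in Step 2.} The paper gives no proof of this lemma; it quotes it from Lonjou--Urech [LU21, Lemma 2.10], so your argument has to stand on its own. Your reductions are fine. Automorphisms lift to the normalization, and the smooth locus is $G$-invariant, so you may choose $V$ inside it. Noetherianity gives $U=\bigcup_{g\in G}g(V)=g_1(V)\cup\dots\cup g_m(V)$, a $G$-invariant dense open set on which $G$ acts regularly.

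The failure is the ampleness claim. The canonical section $s_i$ of $D_i=U\setminus g_i(V)$ is a section of $\mathcal{O}_U(D_i)$, not of $\mathcal{O}_U(D)$ with $D=\sum_j D_j$. To make it a section of $\mathcal{O}_U(D)$ you must multiply by a section of $\mathcal{O}_U(D-D_i)$. The only one your construction provides is $\prod_{j\neq i}s_j$, and the product is the canonical section of $D$. Its non-vanishing locus is $\bigcap_j g_j(V)$ for every $i$, which is not all of $U$ unless $U$ is affine. A section of $\mathcal{O}_U(nD)$ with non-vanishing locus exactly $g_i(V)$ would require $n\sum_{j\neq i}D_j$ to be linearly equivalent to an effective divisor supported on $D_i$, and nothing in your setup gives that. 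The remark that $\mathcal{O}(D)$ is trivialized on $g_i(V)$ is local and produces no global sections. Also, $\sum_g g^*D$ is not a divisor when the $G$-orbit of $D$ is infinite, which is exactly the case that matters.

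\textbf{This cannot be patched without using $G$.} Step 2 never uses that the $V_i$ are translates of one affine set. It would therefore apply to any finite cover of any smooth variety by dense affine opens, whose complements are Cartier divisors. It would then prove that every smooth variety is quasi-projective, which Hironaka's smooth complete non-projective threefold contradicts.

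Even on a projective surface your divisor can fail to be ample. Take $\mathbb{F}_3$ with $(-3)$-section $C$ and fibre class $F$, and let $\Sigma_3,\Sigma_4\in|C+4F|$ be irreducible and meet $C$ at distinct points. The complements of $C\cup F_1$, $C\cup F_2$, $\Sigma_3$, $\Sigma_4$ are affine and cover $\mathbb{F}_3$. Yet $D\equiv 4C+10F$ has $D\cdot C=-2$.

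So the group has to enter the construction of the ample bundle. It does so easily only when all translates $g(E)$ of $E=U\setminus V$ are linearly equivalent on $U$. Then the canonical sections $s_g$ of $\mathcal{O}_U(g(E))\cong\mathcal{O}_U(E)$ satisfy $U_{s_g}=g(V)$, and the ampleness criterion you quote applies. For an arbitrary abstract $G$ acting nontrivially on $\mathrm{Pic}(U)$ this is unavailable, and a genuinely different argument is needed. That is precisely the input the paper imports from [LU21].

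A minor point: the complement of an affine open need not have pure codimension one in a merely separated variety. What you actually use is normality (the $S_2$ property), which you have arranged.
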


To study pseudo-regularization (in codimension $l+1$) of elements $f \in \Psaut^l(X)$ the authors of \cite{LU21} introduced the following asymptotic invariant, which is clearly a stable seminorm on the group $\Psaut^l(X)$.

\begin{definition}\label{dynnum} Let $f \colon X \dasharrow X$ be a pseudo-automorphism in codimension $l$. Let $\Exc^l(f)$ be the set of irreducible components of pure codimension $l$ of the exceptional locus of $f$. The function $$\nu^{l+1}(f) = \limsup_{n \to \infty}\frac{|\Exc^{l+1}(f^{\circ n})|}{n}$$ is called the {\em dynamical number of the (l+1)-exceptional locus}.
\end{definition}

The criterion for pseudo-regularization can be stated as follows \cite[Proposition 4.17]{LU21}.

\begin{theorem}\label{lureg} An element $f \subset \Psaut^{l}(X)$, where $0 \leqslant l \leqslant \dim(X) - 1$, is pseudo-regularizable in codimension $l+1$ by a pseudo-isomorphism in codimension $l$ if and only if $\nu^{l+1}(f) = 0$. A subgroup $G \subset \Psaut^{l}(X)$ is pseudo-regularizable in codimension $l+1$ if and only if the subset $\{|\mathrm{Exc}^{l+1}(g)| \mid g \in G\} \subset \mathbb{N}$ is bounded.
\end{theorem}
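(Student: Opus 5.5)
The plan is to realize the dynamical quantities in the statement as displacement data for an isometric action of $\Psaut^{l}(X)$ on the median graph $\mathcal{C}^{l+1}(X)$ of \cite{LU21, Lon25}, and then to use fixed-point theory for median graphs.

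\textbf{Step 1: the graph.} Vertices of $\mathcal{C}^{l+1}(X)$ are (classes of) varieties $Y$ with a birational map $Y \dasharrow X$ that is a pseudo-isomorphism in codimension $l$. Two such models are identified when they are pseudo-isomorphic in codimension $l+1$. Edges correspond to adding or removing a single codimension-$(l+1)$ "irreducible component", that is, a valuation of the appropriate kind. Hyperplanes are indexed by these components. The group $\Psaut^{l}(X)$ acts by precomposition, and this action is by graph automorphisms. I will take the median property and connectedness from \cite{LU21}.

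\textbf{Step 2: displacement of the base vertex.} The key computation is that the combinatorial distance from the base vertex $[X]$ to $g\cdot[X]$ equals the number of hyperplanes separating them. These hyperplanes are exactly the codimension-$(l+1)$ components contracted by $g$ or by $g^{-1}$, so
$$d([X], g[X]) = |\Exc^{l+1}(g)| + |\Exc^{l+1}(g^{-1})|.$$
For a pseudo-isomorphism in codimension $l$ the two counts should agree, since both equal the number of codimension-$(l+1)$ valuations that are centered on $X$ but not on the other side. Hence this distance is $2|\Exc^{l+1}(g)|$, and in particular $\nu^{l+1}(f)$ is half the stable translation length of $f$ on $\mathcal{C}^{l+1}(X)$.

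\textbf{Step 3: the group case.} If $\{|\Exc^{l+1}(g)|\}$ is bounded, Step 2 shows that the orbit of $[X]$ is bounded. A group acting on a median graph with a bounded orbit fixes a finite cube, by the centre/median argument on the convex hull of the orbit. I will then pass to the cubical subdivision, or argue directly, to obtain a fixed vertex. A fixed vertex $[Y]$ means that every $g$ induces a pseudo-automorphism in codimension $l+1$ of $Y$, which is the required regularization. Conversely, if $G$ is regularized on some $Y$, then $G$ fixes $[Y]$, so the orbit of $[X]$ is bounded and the counts are bounded.

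\textbf{Step 4: the single-element case.} I expect this to be the main obstacle. One direction is easy: if $f$ is regularized by a pseudo-isomorphism in codimension $l$, then $f$ fixes a vertex, the counts $|\Exc^{l+1}(f^n)|$ are bounded, and so $\nu^{l+1}(f)=0$. For the converse, $\nu^{l+1}(f)=0$ only says that the translation length vanishes; I must show that $f$ is actually elliptic. I would first try Haglund's dichotomy: an automorphism of a median graph either has a bounded orbit or some power preserves a combinatorial geodesic axis, on which it translates with positive length. In the second case the orbit grows linearly, so $\nu^{l+1}(f)>0$.

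The difficulty is that $\mathcal{C}^{l+1}(X)$ is infinite-dimensional, so Haglund's theorem does not apply directly. I would circumvent this by restricting to the convex hull of the $\langle f\rangle$-orbit of $[X]$. Using that $f$ has only finitely many exceptional components per iterate, and an argument on hyperplane inversions (no hyperplane is sent to a nested copy of itself when the stable length is zero), I would show that this hull is finite-dimensional. Haglund's dichotomy then applies on the hull and gives a bounded orbit, and Step 3 yields a fixed vertex.
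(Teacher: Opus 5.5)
Your architecture (the isometric action on the Lonjou--Urech median graph, displacement of the base vertex given by exceptional counts, and a bounded orbit plus orientation-preservation giving a fixed vertex) is the one behind \cite[Proposition 4.17]{LU21}. The paper cites that result and does not reprove it. Note that your $\mathcal{C}^{l+1}(X)$ is the paper's $\mathcal{C}^{l}(X)$.

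The genuine gap is Step 4, which rests on a false premise and then on a false claim. The premise is that Haglund's combinatorial semisimplicity theorem needs finite dimension; it does not. It starts from a vertex $w$ minimizing $d(w,fw)$, which exists in any median graph because displacements are non-negative integers. For an action without inversions, which is guaranteed here because $\Psaut^{l}(X)$ preserves edge orientations, it shows that $f$ either fixes a vertex or has a combinatorial axis. On that axis $f$ translates by a positive integer, so $\tau(f)\geqslant 1$. This is exactly item (3) of Theorem \ref{lonjouurech}, together with the integrality in (4), and it closes Step 4 at once. Indeed, $\nu^{l+1}(f)=0$ forces $\tau(f)=0$, so $f$ is not loxodromic. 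Hence $f$ fixes a vertex $[(Y,\varphi)]$, i.e.\ $\varphi^{-1}\circ f\circ\varphi\in\Psaut^{l+1}(Y)$.

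Your workaround fails as stated: the convex hull of the orbit of the base vertex can be infinite-dimensional even when $\nu^{l+1}(f)=0$. Take $l=0$ and lines $L_t=\{x_0=tx_2\}\subset\PP^2$, all passing through $[0:1:0]$. Let $X=\PP^2\setminus L_0\cong\mathbb{A}^2$, and let $f\in\Bir(X)$ be induced by $T\colon[x_0:x_1:x_2]\mapsto[x_0+x_2:x_1:x_2]$, so that $T(L_t)=L_{t+1}$. Then $\Exc(f^n)=L_{-n}\cap X$, so $|\Exc^1(f^n)|=1$ for $n\neq 0$ and $\nu^1(f)=0$; indeed $f$ fixes $[\PP^2]$.

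For finite $S\subset\mathbb{Z}$ put $v_S=[(\PP^2\setminus\bigcup_{i\in S}L_i,\mathrm{id})]$. The distance formula gives $d(v_S,v_{S'})=|S\triangle S'|$. Hence $v_{S''}$ lies in the interval between $v_S$ and $v_{S'}$ whenever $S\cap S'\subseteq S''\subseteq S\cup S'$. The orbit of $[X]=v_{\{0\}}$ is $\{v_{\{n\}}\}_{n\in\mathbb{Z}}$, and closing it up under intervals puts every $v_S$ in the hull. So the hull contains cubes of every dimension, and no argument can derive its finite-dimensionality from vanishing stable length.

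The same example shows that a bounded orbit (here of diameter $2$) can have an unbounded hull, since $d(v_\emptyset,v_S)=|S|$. So the ``centre of the convex hull'' argument in Step 3 is not available as stated either. There you should instead invoke the known fact that an action on a median graph with a bounded orbit stabilizes a cube; Lemma \ref{fixedptlemma} is the finite-orbit case of this. Then use orientation-preservation to get a fixed vertex.
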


A. Lonjou and Ch. Urech proved this criterion by constructing natural actions of the groups $\Psaut^l(X)$ by isometries on certain median graphs. A {\em median graph} is a connected graph $\Gamma$ such that for any triple of vertices $u, v, w \in \Gamma$ there exists a unique vertex $m \in \Gamma$ (called the {\em median point}) such that the equalities $d(u, v) = d(u, m) + d(m, v)$, $d(u, w) = d(u, m) + d(m, w)$ and $d(w, v) = d(w, m) + d(m, v)$ hold. Here $d$ stands for the combinatorial metric on $\Gamma$. There exists a rich theory of groups acting on median graphs, vastly generalizing the more familiar theory of groups acting on simplicial trees. We refer to \cite{GenBook, Gen22} and references therein for more details.

\begin{definition}\label{medgraphs} Fix the number $l \in \{0, \ldots \dim(X)-1\}$ and define the graph $\mathcal{C}^l(X)$ as follows. The vertices of $\mathcal{C}^l(X)$ are equivalences classes of pairs $(A, \varphi)$ where $A$ is an algebraic variety and $\varphi \colon A \dasharrow X$ is a birational map which is an isomorphism in codimension $l$. Two pairs $(A, \varphi)$ and $(B, \psi)$ are equivalent if the induced birational map $\psi^{-1}\circ \varphi \colon A \dasharrow B$ is an isomorphism in codimension $l+1$. The class of the pair $(A, \varphi)$ is denoted by $[(A, \varphi)]$. Two vertices $v_1$ and $v_2$ are connected by an edge (oriented from $v_1$ to $v_2$) if there exists a pair $(A, \varphi)$ representing $v_1$ and an irreducible subvariety $D \subset A$ of codimension $l+1$ such that the vertex $v_2$ is represented by $(A\setminus D, \varphi_{A\setminus D})$.
\end{definition}

Note that the group $\Psaut^l(X)$ acts on the graph $\mathcal{C}^l(X)$ by the following rule: the element $f \in \Psaut^l(X)$ sends the vertex $[(A, \varphi)]$ to $[(A, f \circ \varphi)]$. In \cite{LU21} it is shown that this action of $\Psaut^l(X)$ on $\mathcal{C}^l(X)$ is well-defined, faithful, and preserves the metric and orientation. The induced isometries of the median graphs can be classified. Recall that a {\em cube} in a median graph is a subgraph isomorphic to the 1-skeleton of a cube.

\begin{definition}\label{isometries} Let $\Gamma$ be a median graph and let $g \colon \Gamma \to \Gamma$ be an isometry. Then $g$ is called \begin{enumerate} \item {\em Combinatorially elliptic} or just {\em elliptic} if $g$ fixes a vertex; \item {\em Periodic} if $g$ preserves a cube in $\Gamma$; \item {\em Combinatorially loxodromic} or just {\em loxodromic} if $g$ preserves a doubly infinite geodesic and acts on it by a non-trivial translation. \end{enumerate} 
\end{definition}

The key properties of the actions of $\Psaut^l(X)$ on $\mathcal{C}^l(X)$ are summarized in the theorem below (see \cite[Section 4]{LU21} and \cite[Section 5]{Lon25} for the proofs).

\begin{theorem} \label{lonjouurech} Let $X$ be an algebraic variety of dimension $d$ over an algebraically closed field $k$ and let $l \in \{0, \ldots, d-1\}$. Then the following statements are true. \begin{enumerate} \item The graph $\mathcal{C}^l(X)$ from Definition \ref{medgraphs} is a median graph; \item The combinatorial distance between vertices $v_1 = [(A, \varphi)]$ and $v_2 = [(B, \psi)]$ in $\mathcal{C}^l(X)$ is equal to $$d(v_1, v_2) = |\mathrm{Exc}^{l+1}(\psi^{-1}\circ\varphi)| + |\mathrm{Exc}^{l+1}(\varphi^{-1}\circ\psi)|;$$ \item An isometry of $\mathcal{C}^l(X)$ induced by an element $f \in \Psaut^l(X)$ is either combinatorially elliptic or combinatorially loxodromic; \item the translation length function for the action of $\Psaut^l(X)$ on $\mathcal{C}^l(X)$ is equal to $$\tau(f) = \nu^{l+1}(f) + \nu^{l+1}(f^{-1}) = 2\nu^{l+1}(f),$$ and moreover the function $\nu^l$ is integer-valued. \end{enumerate}
\end{theorem}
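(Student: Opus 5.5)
The statement collects results of \cite{LU21, Lon25}. I would prove it by realizing $\mathcal{C}^l(X)$ as the cube complex dual to a space with walls, and then reading off (2)--(4) from the general theory of isometries of median graphs.

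\textbf{Walls and part (1).} The walls are indexed by the set $\mathcal{W}$ of codimension $l+1$ irreducible subvarieties up to birational identification, i.e.\ their generic points viewed as scheme-theoretic points of the function field. Any two vertices $[(A,\varphi)]$ representing models isomorphic in codimension $l$ differ in only finitely many elements of $\mathcal{W}$. Concretely, these are the codimension $l+1$ points of $A$ not realized on $B$, and vice versa. So a vertex is the same thing as a subset of $\mathcal{W}$ (the points realized on the model), and any two such subsets have finite symmetric difference. The edges add or remove exactly one point.
\begin{itemize}
\item The main lemma is \emph{realizability}. If $v$ is a vertex with model $A$ and $S$ is a finite set of codimension $l+1$ points, then removing points of $S$ from $A$, or adding points by gluing in a suitable open piece of another model that is isomorphic to $A$ in codimension $l$, again gives a vertex.
\item Granting this, $\mathcal{C}^l(X)$ is a connected component of the graph of finite-difference subsets of $\mathcal{W}$. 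That graph is the $1$-skeleton of a product of intervals, restricted to a convex set, hence a median graph (Sageev--Roller duality).
\item The median of three vertices is the majority subset. One has to check it is realized, and this again follows from the gluing lemma.
\end{itemize}
I expect the gluing lemma to need separatedness checks, which is where \cite{Lon25} refines \cite{LU21}.

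\textbf{Part (2).} In such a graph the distance between two vertices is the cardinality of the symmetric difference of the corresponding subsets. The points on $A$ missing from $B$ are exactly the irreducible components of codimension $l+1$ of the exceptional set of $\psi^{-1}\circ\varphi$, and symmetrically for the other direction. This gives the formula in (2).

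\textbf{Part (3).} $\Psaut^l(X)$ acts by post-composition and preserves the set-inclusion structure, hence preserves edge orientations. By Haglund's classification, an automorphism of a median graph is either combinatorially elliptic, or combinatorially loxodromic, or inverts a hyperplane. An inversion would reverse the orientation of an edge, which is impossible here. So every $f \in \Psaut^l(X)$ is elliptic or loxodromic, which proves (3).

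\textbf{Part (4), and the expected obstacle.} For a combinatorial isometry one has $\tau(f)=\min_v d(v,fv)\in\mathbb{Z}_{\ge0}$, since it is realized on an axis or at a fixed vertex. This gives integrality. Taking the base vertex $v_0=[(X,\mathrm{id})]$ and applying (2),
$$d(v_0,f^nv_0)=|\Exc^{l+1}(f^n)|+|\Exc^{l+1}(f^{-n})|,$$
and dividing by $n$ gives $\tau(f)=\nu^{l+1}(f)+\nu^{l+1}(f^{-1})$. It remains to show $\nu^{l+1}(f)=\nu^{l+1}(f^{-1})$, and I expect this to be the main obstacle. The plan is as follows.
\begin{itemize}
\item Take a vertex $v$ on an axis of $f$, so that $f^n v$ runs along a geodesic. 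The hyperplanes crossed split into those where a point is removed and those where a point is added.
\item Since $f$ maps the subset attached to $v$ bijectively to the subset attached to $fv$, the net change per step has zero average. Compare the finite differences between $v$ and $v_0$, which are bounded independently of $n$.
\item Hence the numbers of removed and added points over $n$ steps differ by $O(1)$. Both therefore grow at rate $\tfrac12\tau(f)$, which gives $\nu^{l+1}(f)=\nu^{l+1}(f^{-1})$ and hence $\tau(f)=2\nu^{l+1}(f)$.
\end{itemize}
The delicate point is making the ``net change has zero average'' count precise for infinite subsets. I would do this by working only with differences relative to a fixed vertex, so that every count involved is finite.
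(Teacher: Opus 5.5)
The paper does not prove this statement itself; it quotes it from Lonjou--Urech and Lonjou. Your sketch therefore has to stand on its own, and two of its load-bearing steps fail.

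Parts (2) and (3) are essentially fine. Part (2) does not even need (1): each edge changes the set of codimension $l+1$ points by exactly one element, and the path $A\supset A\setminus\Exc(\psi^{-1}\circ\varphi)\cong B\setminus\Exc(\varphi^{-1}\circ\psi)\subset B$ attains that count. In (3), Haglund's theorem should be quoted as ``if no power of $g$ inverts a hyperplane, then $g$ is elliptic or loxodromic''. Your trichotomy as stated is false: a quarter turn of a square stabilizes no hyperplane and fixes no vertex. Orientation preservation passes to powers, though, so your conclusion survives.

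\emph{The equality $\nu^{l+1}(f)=\nu^{l+1}(f^{-1})$.} Your reason is that $f$ maps $S(v)$ bijectively onto $S(fv)$, so the net change averages to zero. This is purely combinatorial, and it is false at that level. Take $\mathcal{W}=\mathbb{Z}$, vertices the subsets commensurable with $\mathbb{Z}_{\geqslant 0}$, and $f(S)=S+1$. Then $f$ is an orientation-preserving loxodromic with $\tau(f)=1$ that removes one point per step and adds none.

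Working relative to $v_0$ only reformulates the problem. Set $\chi(S)=|S\setminus S(v_0)|-|S(v_0)\setminus S|$. Then $\chi(fS)=\chi(S)+c(f)$, where $c(f)=|\Exc^{l+1}(f)|-|\Exc^{l+1}(f^{-1})|$ is a homomorphism $\Psaut^l(X)\to\mathbb{Z}$. Hence $\nu^{l+1}(f)-\nu^{l+1}(f^{-1})=c(f)$ exactly. What you need is the non-asymptotic identity $|\Exc^{l+1}(f)|=|\Exc^{l+1}(f^{-1})|$, and this requires geometric input.

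One way to get it: $f$ restricts to an isomorphism $X\setminus\Exc(f)\cong X\setminus\Exc(f^{-1})$. So $\Exc(f)$ and $\Exc(f^{-1})$ have the same class in the Grothendieck ring of varieties. The coefficient of $(uv)^{d-l-1}$ in the $E$-polynomial (or a Lang--Weil point count after spreading out) is the number of components of codimension $l+1$, so the two counts agree.

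Note also that integrality of $\tau$ only gives $\nu^{l+1}\in\tfrac12\mathbb{Z}$. Integrality of $\nu^{l+1}$ comes from counting the ``adding'' hyperplanes crossed in one period of the axis.

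\emph{Part (1).} Your gluing lemma is false for $l\geqslant 1$. So is the claim that $\mathcal{C}^l(X)$ is a full component of the cube of commensurable subsets; it is not even convex there.

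Let $X$ and $X^+$ be the two small resolutions of the node $\{xy=zw\}\subset\mathbb{A}^4$, with exceptional curves $C$ and $C^+$. The flop $X^+\dashrightarrow X$ gives a vertex of $\mathcal{C}^1(X)$ at distance $2$ from $[(X,\mathrm{id})]$. However, no variety realizes $S(X)\cup\{C^+\}$. The valuation of the exceptional divisor of the blow-up of the node dominates both $\mathcal{O}_{X,C}$ and $\mathcal{O}_{X^+,C^+}$, which contradicts the valuative criterion of separatedness. So the interval between these two vertices is a path, not a square, and ``the majority set is realized by the gluing lemma'' has no justification.

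What is needed is a realizability criterion of pairwise type: a finite modification of $S(A)$ is a vertex iff no two of its points are dominated by a common valuation ring. Necessity is the argument just given; sufficiency is the real work, including removing codimension $\geqslant l+2$ loci on both sides of each gluing. You also need the observation that pairwise constraints are preserved by majority: if two of three sets contain $p$ and two contain $q$, then one of them contains both. Your sketch supplies neither.
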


In view of Theorem \ref{lonjouurech} the pseudo-regularization criterion (Theorem \ref{lureg}) can be restated in the following geometric way.

\begin{theorem}\label{lureg2} Let $X$ be an algebraic variety and let $0 \leqslant l \leqslant \dim(X) - 1$. A subgroup $G \subset \Psaut^{l}(X)$ is pseudo-regularizable in codimension $l+1$ by a pseudo-isomorphism in codimension $l$ if and only if the natural action of $G$ on $\mathcal{C}^l(X)$ fixes a vertex. 
\end{theorem}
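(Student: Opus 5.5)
The plan is to derive Theorem \ref{lureg2} from Theorem \ref{lureg} and Theorem \ref{lonjouurech}(2). Both implications reduce to the same dictionary: a bounded orbit of $G$ in $\mathcal{C}^l(X)$ corresponds to bounded exceptional sets $|\Exc^{l+1}(g)|$. I will prove each direction separately.

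\emph{Regularization gives a fixed vertex.} Suppose $G$ is pseudo-regularizable in codimension $l+1$ by a pseudo-isomorphism in codimension $l$. Then there is $\varphi\colon Y\dasharrow X$, an isomorphism in codimension $l$, with $\varphi^{-1}g\varphi\in\Psaut^{l+1}(Y)$ for all $g\in G$. Let $v=[(Y,\varphi)]$. Since $\varphi^{-1}\circ g\circ\varphi$ is an isomorphism in codimension $l+1$, the pairs $(Y,g\circ\varphi)$ and $(Y,\varphi)$ are equivalent by Definition \ref{medgraphs}. Hence $gv=v$. Alternatively, Theorem \ref{lonjouurech}(2) gives $d(v,gv)=|\Exc^{l+1}(\varphi^{-1}g^{-1}\varphi)|+|\Exc^{l+1}(\varphi^{-1}g\varphi)|=0$.

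\emph{A fixed vertex gives regularization.} Suppose $G$ fixes a vertex $v=[(A,\varphi)]$. The same computation read backwards shows that $\varphi^{-1}g\varphi$ has no exceptional components of codimension $l+1$ on $A$. It is also a pseudo-isomorphism in codimension $l$, because $\varphi$ and $g$ are. Therefore $\varphi^{-1}g\varphi\in\Psaut^{l+1}(A)$, and $\varphi$ is the required conjugating map.

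\emph{Consistency with Theorem \ref{lureg}.} Take the base vertex $v_0=[(X,\mathrm{id})]$. By Theorem \ref{lonjouurech}(2), $d(v_0,gv_0)=|\Exc^{l+1}(g)|+|\Exc^{l+1}(g^{-1})|$. So the orbit $Gv_0$ is bounded exactly when the set $\{|\Exc^{l+1}(g)|\}$ is bounded, using that $G$ is closed under inverses. This matches Theorem \ref{lureg}. It also reflects the general fact that a group acting on a median graph with a bounded orbit fixes a vertex, via the combinatorial center of a bounded invariant set (cf.\ \cite{GenBook}).

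\emph{Main obstacle.} The subtle point is that the vertex produced by that general fact could be a class not of the form $[(Y,\varphi)]$ with $Y$ an honest variety. By Definition \ref{medgraphs}, however, every vertex is such a class, so the direct argument above already handles this. What remains to check is that the equivalence relation is well defined, which is part of the construction in \cite{LU21}.
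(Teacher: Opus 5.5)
Your proof is correct, but your main argument is not the route the paper takes. The paper gives no separate proof. It presents Theorem \ref{lureg2} as a reformulation of the numerical criterion of Theorem \ref{lureg}. The distance formula of Theorem \ref{lonjouurech}(2) identifies boundedness of $\{|\Exc^{l+1}(g)|\}$ with boundedness of the orbit of $[(X,\mathrm{id})]$. Then one uses the median-graph fact that an orientation-preserving action with a bounded orbit fixes a vertex. This is what your ``consistency'' paragraph sketches.

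Your main argument instead unwinds Definition \ref{medgraphs}. The equality $g\cdot[(A,\varphi)]=[(A,\varphi)]$ says exactly that $\varphi^{-1}\circ g\circ\varphi$ is an isomorphism in codimension $l+1$. So the stabilizer of $[(A,\varphi)]$ in $\Psaut^l(X)$ is precisely $\varphi\,\Psaut^{l+1}(A)\,\varphi^{-1}$, and the theorem is tautological once the action is known to be well defined.

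What your route buys:
\begin{itemize}
\item It needs neither Theorem \ref{lureg} nor any fixed-point theorem. The paper's route actually needs the bounded-orbit version of the fixed-point fact, while Lemma \ref{fixedptlemma} is stated only for finite orbits, and $\mathcal{C}^l(X)$ has infinite valence.
\item It avoids deducing the statement from Theorem \ref{lureg}, which the paper itself says Lonjou and Urech prove via these same median graphs.
\item It gives exactly the direction used in the proof of Theorem \ref{main}, where a fixed vertex obtained from Proposition \ref{fixedpt} is converted into pseudo-regularization.
\end{itemize}
What the paper's route adds is the link between the fixed-vertex condition and the checkable invariants $|\Exc^{l+1}|$ and $\nu^{l+1}$.

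Finally, you are right to read ``pseudo-regularizable in codimension $l+1$'' as conjugation into $\Psaut^{l+1}$. This matches Theorem \ref{lureg} and the proof of Theorem \ref{main}; Definition \ref{psreg} as printed is off by one.
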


\begin{remark}\label{blowup} The median graphs $\mathcal{C}^l(X)$ described above generalize to higher-dimensional varieties the construction of the {\em blow-up complex} $\mathcal{C}(S)$ of a smooth projective surface $S$ (see \cite[Section 3]{LU21}). It is also a median graph, and the translation length function of the action of $\Bir(S)$ on $C(S)$ is (twice) the {\em dynamical number of base points} $\nu(f)$. In \cite[Theorem 3.9 and Proposition 3.11]{LU21} it was shown that a subgroup $G \subset \Bir(S)$ is regularizable on a {\em projective} surface $S'$ birational to $S$ if and only if the action of $G$ on the blow-up complex fixes a vertex.
\end{remark}

\section{Translation lengths on polycyclic groups}

\subsection{Generalities on (virtually) polycyclic groups}

\begin{definition}\label{polycyclic} A group $G$ is called {\em polycyclic} if there exists a finite sequence of subgroups \begin{equation} \label{series} G = G_0 \rhd G_1 \rhd \cdots \rhd G_n = \{1\}\end{equation} such that for every $i$ the subgroup $G_{i+1}$ is normal in $G_{i}$ and the quotient group $G_{i}/G_{i+1}$ is cyclic. A group is {\em virtually polycyclic} if it contains a polycyclic subgroup of finite index.
\end{definition}

It is immediate from the definition that every polycyclic group is solvable. Conversely, there are several ways to characterize polycyclic groups among solvable ones (see \cite[Proposition 4]{Seg83} for the proof).

\begin{proposition}\label{polycycequiv} A group $G$ is polycyclic if and only if it satisfies one of the following equivalent conditions: \begin{enumerate} \item $G$ is solvable and has the max condition on subgroups: every family of subgroups in $G$ has a maximal member; \item $G$ is solvable and noetherian, that is, every subgroup of $G$, including $G$ itself, is finitely generated. \end{enumerate}
\end{proposition}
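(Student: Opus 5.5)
We prove the equivalences by cycling through the implications: polycyclic $\Rightarrow$ (2) $\Rightarrow$ (1) $\Rightarrow$ polycyclic. In each of (1) and (2) solvability is part of the hypothesis, so the work lies in the finiteness conditions.

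\emph{Polycyclic implies (2).} Solvability is immediate from the definition, since cyclic quotients are abelian. To show every subgroup is finitely generated, we argue by induction on the length $n$ of a cyclic series \eqref{series} for $G$. Let $H \subset G$ be a subgroup. Then $H \cap G_1$ is a subgroup of $G_1$. The group $G_1$ is polycyclic with a series of length $n-1$, so by induction $H \cap G_1$ is finitely generated. The quotient $H/(H\cap G_1)$ embeds into the cyclic group $G/G_1$, so it is cyclic. An extension of a finitely generated group by a cyclic group is finitely generated: take generators of the kernel together with one lift of a generator of the quotient. Hence $H$ is finitely generated.

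\emph{(2) implies (1).} This is the standard equivalence between the maximal condition and finite generation of all subgroups. Suppose some nonempty family of subgroups has no maximal member. By dependent choice we obtain a strictly ascending chain $H_1 \subsetneq H_2 \subsetneq \cdots$. Its union $U$ is a subgroup, hence finitely generated by (2). All finitely many generators lie in some $H_m$, so $U = H_m$, which contradicts strictness of the chain.

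\emph{(1) implies polycyclic.} This is the main step. First, the max condition gives finite generation of every subgroup: in the family of finitely generated subgroups of a given $H$, choose a maximal member $K$. If $K \neq H$, adjoin any $h \in H\setminus K$ to get a larger finitely generated subgroup, a contradiction, so $K = H$. Next, consider the derived series $G = G^{(0)} \rhd G^{(1)} \rhd \cdots \rhd G^{(k)} = \{1\}$, which terminates because $G$ is solvable. Each factor $G^{(i)}/G^{(i+1)}$ is abelian. It is also finitely generated, being a quotient of the finitely generated group $G^{(i)}$. By the structure theorem, a finitely generated abelian group has a finite series with cyclic factors, for instance by splitting off one generator at a time. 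Pulling these series back to $G^{(i)}$ and concatenating them over all $i$ gives a subnormal series of $G$ with cyclic factors. Therefore $G$ is polycyclic.

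The only point needing care is the refinement step. The refined subgroups are normal only in their immediate predecessor, not in $G$, but Definition \ref{polycyclic} requires exactly this and no more. No step should be a serious obstacle; this is the classical argument recorded in \cite[Proposition 4]{Seg83}.
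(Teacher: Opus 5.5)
Your proof is correct and follows the classical route; the paper does not prove this itself but cites \cite[Proposition 4]{Seg83}, where the argument is the same. There one shows that polycyclic groups satisfy max by inducting along the cyclic series, that max is equivalent to every subgroup being finitely generated, and that a solvable group with max is polycyclic by refining its derived series through finitely generated abelian factors.
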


For the next result see e. g. \cite[Propositions 13.80 and 13.85]{DK18}.

\begin{proposition}\label{hirsch} Let $G$ be a virtually polycyclic group. Then there exists a finite-index subgroup $\overline{G} \subset G$ that is {\em poly-infinite-cyclic}, that is, admits a series \eqref{series} such that $\overline{G}_i/\overline{G}_{i+1} \cong \mathbb{Z}$ for all $i$. The number of infinite cyclic quotients in any series \eqref{series} is an invariant of the group $G$ called the Hirsch number and denoted by $h(G)$. 
\end{proposition}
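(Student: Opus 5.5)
We need to prove Proposition \ref{hirsch}: a virtually polycyclic group contains a finite-index poly-infinite-cyclic subgroup, and the Hirsch number is an invariant. The argument runs by induction on the length of a polycyclic series of a finite-index polycyclic subgroup, after first reducing to the polycyclic case.

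\textbf{Reduction.} Let $P\subset G$ be polycyclic of finite index. A finite-index subgroup of $P$ has finite index in $G$, so it suffices to treat $P$. Replacing $P$ by the intersection of its finitely many conjugates, we may also assume $P$ is normal in $G$; this is needed only for invariance under passing to $G$.

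\textbf{Existence.} We induct on the length $n$ of a series $P=P_0\rhd P_1\rhd\cdots\rhd P_n=\{1\}$ with cyclic quotients. By induction, $P_1$ contains a finite-index poly-$\mathbb{Z}$ subgroup $Q$. Since $P_1$ is finitely generated (Proposition \ref{polycycequiv}), it has only finitely many subgroups of a given index. Hence the intersection $Q'$ of all subgroups of $P_1$ of index $[P_1:Q]$ is characteristic in $P_1$, of finite index, and still poly-$\mathbb{Z}$, because subgroups of poly-$\mathbb{Z}$ groups are poly-$\mathbb{Z}$ (intersect the series and note that subgroups of $\mathbb{Z}$ are trivial or $\mathbb{Z}$). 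Consequently $Q'$ is normal in $P$.

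Now consider $P/P_1$, which is cyclic.
\begin{itemize}
\item If $P/P_1$ is finite, then $Q'$ already has finite index in $P$.
\item If $P/P_1\cong\mathbb{Z}$, pick $t\in P$ mapping to a generator. Conjugation by $t$ induces an automorphism of the finite group $P_1/Q'$, which therefore has some finite order $m$. The subgroup $\langle t^m\rangle Q'$ is a subgroup, since $t^m$ normalizes $Q'$. It has finite index in $P$, and it has the series $\langle t^m\rangle Q'\rhd Q'\rhd\cdots$, whose top quotient is infinite cyclic. So it is poly-$\mathbb{Z}$.
\end{itemize}
One can simplify slightly by using $\langle t\rangle Q'$ directly: it has finite index and top quotient $\cong\mathbb{Z}$.

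\textbf{Invariance of $h$.} This is the main step. Define $h(P)$ as the number of infinite cyclic factors in a given series. We show this is independent of the series via Schreier refinement: any two subnormal series with cyclic factors have equivalent refinements. Refining a cyclic factor $C$ by $C\rhd C'\rhd C''$ produces exactly one infinite factor when $C\cong\mathbb{Z}$, since a chain of subgroups of $\mathbb{Z}$ has exactly one infinite factor. It produces none when $C$ is finite. Hence refinement preserves the count, and equivalent series have the same count.

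Finally, $h$ is unchanged under passing to finite-index subgroups. Given $H\subset P$ of finite index, intersect the series of $P$ with $H$. Each factor $(H\cap P_i)/(H\cap P_{i+1})$ embeds in $P_i/P_{i+1}$ with finite index, so it is infinite exactly when $P_i/P_{i+1}$ is. For virtually polycyclic $G$, define $h(G)=h(P)$ for any finite-index polycyclic $P$. This is well defined because two such subgroups share the finite-index subgroup $P\cap P'$. The only delicate points are the finite-index bookkeeping and the Schreier refinement count; the rest is routine.
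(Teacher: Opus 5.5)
Your proof is correct. The existence step works: you take a characteristic finite-index poly-$\mathbb{Z}$ subgroup $Q'\subset P_1$, which is then normal in $P$, and adjoin a lift $t$ of a generator when $P/P_1\cong\mathbb{Z}$. The invariance step also works: Schreier refinement, plus the fact that intersecting a cyclic series with a finite-index subgroup preserves which factors are infinite.

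The paper gives no argument of its own here, only the reference \cite[Propositions 13.80 and 13.85]{DK18}, and what you wrote is the standard argument for these facts. As you noticed, the normal-core reduction and the exponent $m$ are superfluous but harmless.
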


Here are a few important properties of polycyclic groups that we will need (see \cite[Propositions 13.73 and 13.75]{DK18} and \cite[Exercise 8]{Seg83}).

\begin{proposition}\label{polycprop} \begin{enumerate} \item The class of polycyclic groups is closed under taking subgroups, quotient groups and extensions; \item If $G$ is a polycyclic group and $N \lhd G$ is a normal subgroup then $h(G) = h(N) + h(G/N)$; \item A nilpotent group is polycyclic if and only if it is finitely generated.\end{enumerate}
\end{proposition}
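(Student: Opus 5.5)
The statement just given (Proposition \ref{polycprop}) collects three standard facts, and I will prove them in the order (1), (2), (3). Throughout, the characterization of polycyclic groups in Proposition \ref{polycycequiv} (solvable with max condition, equivalently solvable and noetherian) is the main tool.

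For (1), I will use the series definition directly. Let $G = G_0 \rhd \cdots \rhd G_n = \{1\}$ have cyclic factors.
\begin{itemize}
\item \emph{Subgroups.} For $H \subset G$, set $H_i = H\cap G_i$. Then $H_{i+1} \lhd H_i$, and $H_i/H_{i+1}$ embeds into $G_i/G_{i+1}$, so it is cyclic.
\item \emph{Quotients.} For $N \lhd G$, the images $G_iN/N$ form a series of $G/N$. Each factor $G_iN/G_{i+1}N$ is a quotient of $G_i/G_{i+1}$, hence cyclic.
\item \emph{Extensions.} If $N \lhd G$ with $N$ and $G/N$ polycyclic, take the preimages in $G$ of a cyclic series of $G/N$. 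Then concatenate with a cyclic series of $N$.
\end{itemize}

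For (2), the key observation is that $h$ is well defined by Proposition \ref{hirsch}: it does not depend on the chosen series. Build a series of $G$ passing through $N$, exactly as in the extension step above. Its infinite cyclic factors are those of the part of the series inside $N$ together with those of the lifted series of $G/N$. Counting them gives $h(G) = h(N) + h(G/N)$.

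For (3), one direction is trivial, since polycyclic groups are finitely generated (take one generator per cyclic factor). For the converse, let $G$ be finitely generated and nilpotent, with lower central series $\gamma_1 = G \supset \gamma_2 \supset \cdots \supset \gamma_{c+1} = 1$.
\begin{itemize}
\item I will show by induction on $i$ that each abelian factor $\gamma_i/\gamma_{i+1}$ is finitely generated. The point is that the commutator map induces a multilinear surjection
$$(G/\gamma_2)^{\otimes i} \to \gamma_i/\gamma_{i+1}.$$
Hence $\gamma_i/\gamma_{i+1}$ is generated by the iterated commutators $[x_{j_1},[x_{j_2},\ldots]]$ of the finitely many generators $x_j$ of $G$.
\item A finitely generated abelian group is a finite product of cyclic groups, so it is polycyclic.
\item Applying the extension property from (1) repeatedly along the lower central series then shows that $G$ is polycyclic.
\end{itemize}

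The main obstacle is this multilinearity step in (3). One must verify that $(x,y)\mapsto [x,y]\gamma_{i+2}$ is well defined and bilinear on $G/\gamma_2 \times \gamma_i/\gamma_{i+1}$. This follows from the standard commutator identities $[xy,z] = x[y,z]x^{-1}[x,z]$ together with $[\gamma_i,\gamma_j]\subset \gamma_{i+j}$. Alternatively, one could cite these facts from \cite{DK18} or \cite{Seg83}, as the paper does.
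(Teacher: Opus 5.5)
Your proposal is correct and takes essentially the paper's route: the paper gives no argument of its own for this proposition, only citing \cite[Propositions 13.73 and 13.75]{DK18} and \cite[Exercise 8]{Seg83}, and your proofs are the standard ones behind those citations (series arguments for (1) and (2), and the lower central series with the commutator map for (3)). Two cosmetic points: you announce Proposition \ref{polycycequiv} as the main tool but never use it, since the series definition suffices throughout; and in (3) you do not need $[\gamma_2,\gamma_i]\subset\gamma_{i+2}$, because for fixed $z\in\gamma_i$ the map $x\mapsto[x,z]\gamma_{i+2}$ is a homomorphism from $G$ to the abelian group $\gamma_{i+1}/\gamma_{i+2}$, so it automatically factors through $G/\gamma_2$.
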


The next theorem is a celebrated result of A. I. Mal'cev \cite{Mal} regarding the structure of polycyclic groups.

\begin{theorem}[A. I. Mal'cev]\label{maltsev} Let $G$ be a polycyclic group. Then there exists a finite index subgroup $\overline{G} \subset G$ that is an extension $$1 \to N \to \overline{G} \to A \to 1,$$ where $N$ is a finitely generated nilpotent group, and $A$ is a finitely generated abelian group.
\end{theorem}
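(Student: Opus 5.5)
The plan is the standard proof of Mal'cev's theorem via linearity and Lie–Kolchin; in the paper it is cited from \cite{Mal} or a textbook such as \cite{Seg83}, so we only sketch the argument.

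\textbf{Step 1: make $G$ linear over $\mathbb{Z}$.} By the Auslander–Swan theorem, every polycyclic group embeds in $GL_n(\mathbb{Z})$ for some $n$. We therefore regard $G \subset GL_n(\mathbb{Z}) \subset GL_n(\mathbb{C})$.

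\textbf{Step 2: pass to a connected triangularizable subgroup.} Let $\mathbf{G}$ be the Zariski closure of $G$. It is a solvable algebraic group, and its identity component $\mathbf{G}^0$ has finite index. Set $G_1 = G \cap \mathbf{G}^0$, which has finite index in $G$. Since $\mathbf{G}^0$ is connected and solvable, the Lie–Kolchin theorem lets us conjugate it into the upper triangular matrices. Then the commutator subgroup $[\mathbf{G}^0,\mathbf{G}^0]$ is unipotent, so $[G_1, G_1]$ consists of unipotent matrices.

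\textbf{Step 3: take $N$ and $A$.} Put $\overline{G} = G_1$ and $N = [G_1, G_1]$. A group of unipotent matrices is nilpotent, by Kolchin's theorem, so $N$ is nilpotent. As a subgroup of a polycyclic group it is finitely generated (Proposition \ref{polycprop}(1) together with Proposition \ref{polycycequiv}). The quotient $A = G_1/N$ is abelian, and it is finitely generated because it is a quotient of a finitely generated group.

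\textbf{Alternative route.} If one wants to avoid Auslander–Swan, one can instead let $N$ be the Fitting subgroup. One shows that $G/\mathrm{Fitt}(G)$ acts faithfully on the abelian sections of a $G$-invariant series. This makes it a polycyclic subgroup of $\prod GL_{n_i}(\mathbb{Z})$, which is virtually abelian by the same Lie–Kolchin argument applied to the semisimple parts. Either way, the main obstacle is the linearity input; everything else is formal.
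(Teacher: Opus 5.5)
The paper gives no proof of this statement; it only cites Mal'cev. Your Steps 2 and 3 are correct: they are exactly the proof of Theorem \ref{maltsevlin}, and they use only a faithful representation over some field, not integrality.

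The problem is Step 1. The Auslander--Swan theorem is far deeper than this statement and, as I understand the standard development (e.g.\ \cite{Seg83}), logically downstream of it: its proof uses that a polycyclic group is nilpotent-by-abelian-by-finite, which is the very statement at hand. So your main route is a valid deduction between known theorems, but not a proof of Theorem \ref{maltsev}. Your closing remark also has it backwards: global linearity is neither the obstacle nor needed.

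The non-circular argument is your ``alternative route'', which is Mal'cev's original one, but your sketch skips the step that does the work. Lie--Kolchin applied to a polycyclic subgroup of $\prod_i GL_{n_i}(\mathbb{Z})$ gives only a finite-index subgroup with \emph{unipotent} derived group, not virtual abelianness. For example, $U_3(\mathbb{Z})\subset GL_3(\mathbb{Z})$ is not virtually abelian, yet it is exactly the image of the nilpotent group $\mathbb{Z}^3\rtimes U_3(\mathbb{Z})\subset U_4(\mathbb{Z})$ acting on its section $\mathbb{Z}^3$.

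The missing ingredient is the stability-group lemma: a subgroup of $G$ acting trivially on the finite factors and unipotently on the free abelian factors of a normal series of $G$ is nilpotent. Your faithfulness claim for $G/\mathrm{Fitt}(G)$ also needs this lemma. With it, you do not need the Fitting subgroup at all:
\begin{itemize}
\item Take a normal series $G=H_0\rhd\cdots\rhd H_r=1$ with finite or free abelian factors.
\item Let $G_0$ be the finite-index subgroup that acts trivially on the finite factors and, on each $H_i/H_{i+1}\cong\mathbb{Z}^{n_i}$, acts through the identity component of the Zariski closure of the image of $G$ in $GL_{n_i}(\mathbb{C})$.
\item By Lie--Kolchin, $M=[G_0,G_0]$ acts unipotently on these factors.
\item Intersect $M$-stable flags of $\mathbb{Q}$-subspaces (with trivial $M$-action on successive quotients) with the lattices. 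This refines the series to $M$-invariant subgroups $K_j$ with $[K_j,M]\subseteq K_{j+1}$.
\item Hence $\{K_j\cap M\}$ is a central series of $M$.
\end{itemize}
Thus $M$ is nilpotent, and it is finitely generated as a subgroup of a polycyclic group, while $G_0/M$ is finitely generated abelian.
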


A solvable group is called {\em linear} if it admits an embedding to $\mathrm{GL}_n(F)$ for some $n \in \mathbb{N}$ and some field $F$.

\begin{theorem}[A. I. Mal'cev]\label{maltsevlin} Let $G$ be a linear solvable group. Then $G$ is virtually an extension of an abelian group by a nilpotent group.
\end{theorem}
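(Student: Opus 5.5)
The statement is Mal'cev's theorem: a linear solvable group $G \subset \mathrm{GL}_n(F)$ has a finite index subgroup $\overline{G}$ that is an extension of an abelian group by a nilpotent group, i.e. $[\overline{G},\overline{G}]$ is nilpotent. I would pass to the Zariski closure and exploit the structure of connected solvable algebraic groups.

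\textbf{Step 1 (reduction to a connected algebraic group).} Replace $F$ by its algebraic closure $\bar F$; this does not affect the statement. Let $\mathbf{G}$ be the Zariski closure of $G$ in $\mathrm{GL}_n(\bar F)$. The derived series of $G$ terminates after $k$ steps. The closure of a commutator subgroup equals the commutator subgroup of the closure, because commutator maps are continuous and the commutator of closed subgroups is closed. Hence the derived series of $\mathbf{G}$ also terminates after $k$ steps, so $\mathbf{G}$ is a solvable algebraic group. Its identity component $\mathbf{G}^0$ has finite index in $\mathbf{G}$. Therefore $\overline{G} := G \cap \mathbf{G}^0$ has finite index in $G$, and $\overline{G} \subset \mathbf{G}^0$.

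\textbf{Step 2 (Lie--Kolchin).} $\mathbf{G}^0$ is a connected solvable linear algebraic group over an algebraically closed field. By the Lie--Kolchin theorem it is conjugate into the group $\mathbf{B}$ of upper triangular matrices. The commutator subgroup $[\mathbf{B},\mathbf{B}]$ lies in the group $\mathbf{U}$ of unipotent upper triangular matrices, and $\mathbf{U}$ is nilpotent of class at most $n-1$.

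\textbf{Step 3 (conclusion).} Set $N = [\overline{G},\overline{G}]$. Then $N \subset \mathbf{U}$, so $N$ is nilpotent. Since $\overline{G}/N$ is abelian, $\overline{G}$ is an extension $1 \to N \to \overline{G} \to \overline{G}/N \to 1$ with nilpotent kernel and abelian quotient, as claimed. In the polycyclic setting of Theorem~\ref{maltsev}, $N$ is moreover finitely generated by Proposition~\ref{polycycequiv}, since every subgroup of a polycyclic group is finitely generated.

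The main obstacle is Step 1. The group $G$ itself need not be connected in any sense; the finite index subgroup is produced precisely by intersecting with the identity component of the Zariski closure. This requires checking that taking closure commutes with taking commutators, which holds because the closure of $[H,K]$ equals $[\bar H,\bar K]$ for subgroups $H,K$ of a linear algebraic group. With that in place, Lie--Kolchin does the rest.
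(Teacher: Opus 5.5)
The paper gives no proof of this statement; it quotes Mal'cev's theorem as background. Your argument is the standard one, and it is correct: take the Zariski closure, intersect with its identity component to get a finite-index subgroup, apply Lie--Kolchin, and use $[\mathbf{B},\mathbf{B}]\subseteq\mathbf{U}$.

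\textbf{Step 1 justification.} Your general claims are false as stated. It is not true that $\overline{[H,K]}=[\bar H,\bar K]$ for arbitrary subgroups, nor that the commutator group of two closed subgroups is closed. Take characteristic zero, $a=\mathrm{diag}(1,-1)$ and $b=\bigl(\begin{smallmatrix}1&1\\0&-1\end{smallmatrix}\bigr)$ in $\mathrm{GL}_2$. The finite, hence closed, subgroups $\{1,a\}$ and $\{1,b\}$ have commutator group $\langle (ab)^2\rangle\cong\mathbb{Z}$, which is not Zariski closed.

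You only need the inclusion $[\bar H,\bar H]\subseteq\overline{[H,H]}$, and it follows from continuity alone:
\begin{itemize}
\item For fixed $h\in H$, the map $x\mapsto[h,x]$ sends $\bar H$ into $\overline{[H,H]}$.
\item Then for fixed $y\in\bar H$, the map $x\mapsto[x,y]$ also sends $\bar H$ into $\overline{[H,H]}$.
\item Finally, $\overline{[H,H]}$ is a subgroup, so it contains all of $[\bar H,\bar H]$.
\end{itemize}
Induction then gives $\mathbf{G}^{(i)}\subseteq\overline{G^{(i)}}$, hence $\mathbf{G}^{(k)}=1$. Equality does hold when $H=K$, since $[\mathbf{G},\mathbf{G}]$ is closed for any affine algebraic group, but you do not need it.

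\textbf{Scope.} Mal'cev's original theorem is quantitative: the triangularizable subgroup can be chosen with index bounded in terms of $n$ alone. The identity-component argument does not give this. The paper only uses the qualitative form, so this is no defect in your proof.
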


In fact, according to results of A. I. Mal'cev and A. Auslander--R. Swan (see e. g. \cite{Seg83}) polycyclic groups are precisely those solvable groups that are linear over $\mathbb{Z}$.

It is well-known that the class of solvable groups in closed under extensions, and the derived length is subadditive (see e. g. \cite[Proposition 13.91]{DK18}). in \cite[Theorem 5]{KKMM09} it was shown that if a group $G$ has a virtually solvable normal subgroup $H$ such that $G/H$ is also virtually solvable then $G$ itself is virtually solvable. From this result and Proposition \ref{polycycequiv} it follows easily that the extension of two virtually polycyclic groups is also virtually polycyclic. The next lemma implies that the virtual derived length is subadditive for the class of virtually polycyclic groups.

\begin{lemma}\label{virt} Suppose that $H$ is a finitely generated virtually solvable group such that $\mathrm{vdl}(H) \leqslant n$. Consider an extension $$1 \to H \to G \to A \to 1,$$ where $A$ is a finitely generated virtually abelian group. Then the group $G$ is virtually solvable, and $\mathrm{vdl}(G) \leqslant n+1$.
\end{lemma}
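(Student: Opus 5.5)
Let $H_0 \subset H$ be a solvable subgroup of finite index with $\dl(H_0) = \vdl(H) \leqslant n$. The plan is to replace $H_0$ by a subgroup that is normal in $G$, pass to a finite-index subgroup of $G$ whose image in $A$ is abelian, and then use that the derived length grows by at most one in an extension with abelian quotient.

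\emph{Step 1: a characteristic subgroup.} Since $H$ is finitely generated, it has only finitely many subgroups of any given index $m = [H:H_0]$. Set $K$ to be the intersection of all subgroups of $H$ of index $m$. Then $K$ has finite index in $H$ and is characteristic in $H$. Because $K \subset H_0$, it is solvable with $\dl(K) \leqslant n$. As $H \lhd G$ and $K$ is characteristic in $H$, the subgroup $K$ is normal in $G$.

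\emph{Step 2: reduce to an abelian quotient.} Let $\pi \colon G \to A$ be the quotient map and choose a finite-index abelian subgroup $A_0 \subset A$. Put $G_1 = \pi^{-1}(A_0)$, a finite-index subgroup of $G$ fitting into $1 \to H \to G_1 \to A_0 \to 1$. The group $G_1/K$ is an extension of the finite group $H/K$ by the finitely generated abelian group $A_0$.

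\emph{Step 3: make the quotient abelian.} We want a finite-index subgroup $G_2 \subset G_1$ with $G_2 \cap H \subset K$, so that $G_2/(G_2\cap K)$ embeds into $A_0$ and is abelian. Then $[G_2,G_2] \subset K$, hence $\dl(G_2) \leqslant \dl(K)+1 \leqslant n+1$. Since $[G:G_2]$ is finite, this gives $\vdl(G) \leqslant n+1$ and also shows $G$ is virtually solvable.

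This step is the main obstacle: finite-by-abelian groups need not be virtually abelian in general, but here finite generation saves us. The group $Q = G_1/K$ is finitely generated, since $H$ and $A_0$ are. Its finite normal subgroup $F = H/K$ gives a conjugation map $Q \to \Aut(F)$. Let $C$ be the kernel of this map, i.e. the centralizer of $F$ in $Q$; then $C$ has finite index in $Q$.
\begin{itemize}
\item In $C$, the subgroup $F \cap C$ is central, and $C/(F\cap C)$ embeds in $A_0$, so $C$ is central-by-abelian.
\item Hence $C$ is nilpotent of class at most $2$, and $[C,C] \subset F \cap C$.
\item $C$ is finitely generated, being of finite index in $Q$. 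So $C/[C,C]$ is a finitely generated abelian group.
\item Choose a free abelian subgroup $T \subset C/[C,C]$ of finite index; its preimage $C'$ in $C$ has finite index in $C$.
\item $C'$ maps to $T$ with finite kernel $[C,C]$. Since $T$ is free abelian, this splits, giving $T' \subset C'$ with $T' \cong T$ and $T' \cap F = 1$.
\item $T'$ has finite index in $C'$ (its complement $[C,C]$ is finite), so $T'$ has finite index in $Q$.
\end{itemize}
Take $G_2$ to be the preimage of $T'$ in $G_1$. It has finite index in $G_1$, and since $T' \cap F = 1$ we get $G_2 \cap H \subset K$. This completes Step 3 and the proof.
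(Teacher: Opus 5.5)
Steps 1 and 2 are correct, and so is the reduction at the start of Step 3. Specifically: $K$ is characteristic of finite index with derived length at most $n$, $Q=G_1/K$ is finitely generated, and the centralizer $C$ of the finite normal subgroup $F=H/K$ has finite index in $Q$. Moreover $C$ has class at most $2$ with $[C,C]\subset F\cap C$ finite and central, and a finite-index $T'\subset Q$ with $T'\cap F=1$ would indeed finish the proof. The step that fails is \emph{``$C'\to T$ has finite kernel and $T$ is free abelian, so it splits.''} A free abelian group of rank $\geq 2$ is projective only among abelian groups. A homomorphic section $T\to C'$ needs lifts of a basis of $T$ that commute in $C'$, and $C'$ is in general non-abelian.

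Here is a counterexample within the lemma's hypotheses. Let $G=\langle x,y,z\mid [x,y]=z,\ [x,z]=[y,z]=1,\ z^p=1\rangle$, $H=\langle z\rangle\cong\mathbb{Z}/p$ and $A=\mathbb{Z}^2$. Then $\mathrm{vdl}(H)=0$, so your construction gives $H_0=K=1$, $Q=C=G$ and $[C,C]=\langle z\rangle$. With $T=C/[C,C]\cong\mathbb{Z}^2$ one gets $C'=G$. Any lifts $xz^a,\ yz^b$ of the standard basis satisfy $[xz^a,yz^b]=z\neq 1$, so no section and no such $T'\cong T$ exists. The lemma itself still holds here, since $\langle x^p,y\rangle$ is abelian of index $p^2$.

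What is missing is the passage to powers, which is exactly the device in the paper's proof: choosing exponents $m_i$ with $[g_i^{m_i},g_j^{m_j}]\in\overline{H}$. In your setting the repair is short. Let $e$ be the exponent of the finite central subgroup $[C,C]$. In the class-$2$ group $C$ one has $[u^e,v]=[u,v]^e=1$. Lift a basis $t_1,\dots,t_s$ of $T$ to $c_1,\dots,c_s\in C'$. The elements $c_1^e,\dots,c_s^e$ then commute and generate $T'\cong\mathbb{Z}^s$, mapping isomorphically onto $eT$. Hence $T'$ has finite index in $Q$, and $T'\cap F=1$ because $T'$ is torsion-free and $F$ is finite. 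Alternatively, $Q$ is finitely generated and virtually nilpotent, hence residually finite, so some finite-index normal subgroup of $Q$ misses $F$.

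With this fix your argument is a valid, somewhat more structural variant of the paper's. The paper works directly with lifts $g_i$ of a basis of $A\cong\mathbb{Z}^k$, using finiteness of $H/\overline{H}$ and commutator identities to produce the $m_i$. You instead isolate the underlying reason: a finitely generated finite-by-abelian group is virtually class-$2$ nilpotent. After that you need powers only to kill the finite commutator subgroup.
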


\begin{proof} Passing to a finite index subgroup of $G$, we may assume that $A \cong \mathbb{Z}^k$ and that $k \geqslant 1$. Choose a basis $a_1, \ldots, a_k$ of the free abelian group $A$ and let $g_1, \ldots, g_k$ be elements of $G$ such that $g_i \in \pi^{-1}(a_i)$ for every $i \in \{1, \ldots, k\}$. Then we have $$[g_i, g_j] \in H \quad \mbox{for all} \quad 1 \leqslant i, j \leqslant k.$$
Let $\overline{H} \subset H$ be a solvable subgroup of finite index and derived length at most $n$. Since $H$ is finitely generated, there are only finitely many subgroups in $H$ of the same index as $\overline{H}$. Hence, we may assume $\overline{H}$ to be characteristic in $H$ (hence, normal in $G$). Suppose that one can find positive integers $m_1, \ldots, m_k$ such that 
\begin{equation}\label{finind} [g^{m_i}_i, g^{m_j}_j] \in \overline{H} \quad  \mbox{for all} \quad 1 \leqslant i, j \leqslant k.
\end{equation} 
Then we claim that the subgroup $\overline{G} \subset G$ generated by $\overline{H}$ and the elements $g^{m_1}_1, \ldots, g^{m_k}_k$ is solvable of derived length $\leqslant n+1$ and of finite index in $G$. Indeed, the images $\pi(g^{m_i}_i) = a^{m_i}_i$ generate a free abelian subgroup $\overline{A} \subset A \cong \mathbb{Z}^k$ of index $m_1\cdots m_k$. The group $\overline{G}$ is an extension $$1 \to \overline{H} \to \overline{G} \to \overline{A} \to 1,$$ and therefore is solvable of derived length not exceeding $n+1$. In addition, taking the quotient of $G$ by $\overline{H}$ we see that the index of $\overline{G}$ in $G$ is finite, not exceeding $N\cdot m_1 \cdots m_k$.

Therefore, it suffices to find positive integers $m_1, \ldots, m_k$ such that the condition \eqref{finind} is satisfied. Fix a pair of indices $i, j$ and consider the elements $$[g^m_i, g_j], \quad m \in \mathbb{N}.$$ Since $H$ contains the commutator subgroup $G^{(1)}$, it follows that $$[g^m_i, g_j] \in H \quad \mbox{for all} \quad m \in \mathbb{N}.$$ Hence, for each pair of indices $(i, j)$ either $$[g^m_i, g_j] = [g^l_i, g_j], \quad \mbox{for some $m \neq l$, and hence } \quad [g_i^{|m-l|}, g_j] = 1,$$ or the elements $[g^m_i, g_j]$ are all distinct. In the latter case, one has $[g^m_i, g_j] \in \overline{H}$ for some $m \in \mathbb{N}$. Now, recall the standard commutator identities $$[xy, z] = [x, z]^y[y, z], \quad [x, yz] = [x, z][x, y]^z,$$ where $x, y, z \in G$ are arbitrary elements and the superscript denotes conjugation by an element of $G$. Since the subgroup $\overline{H}$ is characteristic in $H$, one has by induction $$[g^{nm_i}_i, g_j] = [g^{(n-1)m_i}_i, g_j]^{g^{m_i}_i}[g^{m_i}_i, g_j] \in \overline{H} \quad \mbox{for all} \quad n \in \mathbb{N}.$$ Analogously, by induction on $l$ we obtain $$[g^{nm_i}_i, g^l_j] = [g^{nm_i}_i, g^{l-1}_j][g^{nm_i}_i, g_j]^{g^{l-1}_j} \in \overline{H} \quad \mbox{for all} \quad n, l \in \mathbb{N}.$$ Thus if we set $m_i = m_{1i}\cdots m_{ki}$, where $[g^{m_{ij}}_i, g_j] \in \overline{H}$, the from the above identities we obtain the inclusion $[g^{m_i}_i, g^{m_j}_j] \in \overline{H}$, as desired. This finishes the proof of the Lemma. 
\end{proof}

\begin{remark} If the abelian group $A$ in Lemma \ref{virt} is not assumed to be finitely generated, the bound $n+1$ does not hold. In fact, in Section 5 of \cite{KKMM09} it is shown that if a virtually solvable group $G$ is an extension of two virtually solvable groups $H$ and $G/H$ then $$\mathrm{vdl}(G) \leqslant \mathrm{vdl}(H) + \mathrm{vdl}(G/H) + 1,$$ and that this bound is optimal in general.
\end{remark}

\subsection{Translation length functions on polycyclic groups}

In this subsection we state a few definitions and general results related to translation length functions on nilpotent and polycyclic groups, mostly following G. Conner's works \cite{Con98, Con00}. Also, we state here a general structure theorem for actions of polycyclic groups on $\mathrm{CAT}(0)$ cube complexes, due to A. Genevois \cite{Gen22}.

First, we state a purely group-theoretic result \cite[Theorem 3.5]{Con98} regarding arbitrary stable seminorms on nilpotent groups.

\begin{theorem}\label{conner1} Let $N$ be a nilpotent group endowed with a seminorm $||.||$ and let $\tau$ be the corresponding stable seminorm. Then the subset $$I(N) = I(N, \tau) = \{g \in N \mid \tau(g) = 0\}$$ is a normal subgroup of $N$. Moreover, one has $$I(N, \tau) \supset \sqrt{N'} = \{g \in N \mid g^k \in N' \mbox{ for some } k \in \mathbb{N}\}.$$
\end{theorem}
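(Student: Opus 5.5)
Throughout, write $N'=[N,N]$ for the commutator subgroup. The plan is to prove three things in turn: that $\tau$ is a class function, that $I(N)$ contains $\sqrt{N'}$, and that $I(N)$ is closed under products. Closure under inverses is automatic, since $\|g^{-n}\| = \|g^n\|$ gives $\tau(g^{-1}) = \tau(g)$.

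\textbf{Conjugation invariance and normality.} By Proposition \ref{properties}(2), $\tau(hgh^{-1}) = \tau(g)$ for all $g,h \in N$. Hence $I(N)$ is closed under conjugation, and it remains to show it is a subgroup.

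\textbf{Distortion of commutators.} The key quantitative input is polynomial distortion of commutators in a nilpotent group. For $x,y$ generating a subgroup of class $c$, one has $[x,y]^{n^2}$ equal to $[x^n,y^n]$ modulo terms of higher weight. Concretely, in class $2$ we have $[x^n,y^n] = [x,y]^{n^2}$ exactly. This gives
\[
\|[x,y]^{n^2}\| \leqslant 2n(\|x\|+\|y\|),
\]
so $\tau([x,y]) \leqslant \lim 2n(\|x\|+\|y\|)/n^2 = 0$.

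For general class $c$, I would argue by induction on the class of $N$, or via the lower central series. The plan is to show that every element of $\gamma_{i}(N)$ for $i \geqslant 2$, and more generally of $\sqrt{N'}$, has $\tau = 0$. Work downward from $\gamma_c(N)$, which is central. There the identity $[x^n,y] = [x,y]^n$ for $x \in \gamma_{c-1}(N)$ and $y \in N$ combines with the inductive input that $\|x^n\|$ is $o(n)$, so that $\|[x,y]^{n}\|$ is $o(n)$ for such commutators. Passing up through the quotients $N/\gamma_{i}(N)$ uses the induced seminorm of Definition \ref{induced}. The cleaner alternative is the known fact that in a finitely generated nilpotent group, any element $g \in \gamma_i(N)$ satisfies $\|g^{n}\|_S = O(n^{1/i})$ in word length. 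Since $\|\cdot\|$ is bounded by a multiple of the word length on the finitely generated subgroup generated by the relevant elements, this gives $\tau(g)=0$. Because $\tau$ only depends on the cyclic subgroup and finitely many elements are involved, reducing to finitely generated $N$ is harmless.

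\textbf{Products, the main obstacle.} Suppose $\tau(g)=\tau(h)=0$ and work in the finitely generated nilpotent group $M=\langle g,h\rangle$. I would prove, by induction on the class of $M$, the statement: \emph{$\tau$ restricted to $M$ vanishes on $\sqrt{M'}$ and factors through a function on $M/\sqrt{M'}$ that is subadditive.} Modulo $\sqrt{M'}$ the group is free abelian, so I would write $(gh)^n = g^n h^n c_n$ with $c_n \in M'$ (Hall–Petresco collection). The hard part is to bound $\|c_n\| = o(n)$, not merely $\tau(c_n)=0$. Hall–Petresco gives $c_n$ as a product of a bounded number of terms $\gamma_i^{\binom{n}{i}}$ with $\gamma_i \in \gamma_i(M)$, and by the distortion estimate each such term has norm $O(n^{i/i})$ controlled more precisely by $o(n)$ through polynomial distortion. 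I expect getting this uniform sublinear bound to be the genuine difficulty. I would first try the word-length bound $\|\gamma^{m}\|_S = O(m^{1/i})$ for $\gamma\in\gamma_i(M)$, giving $\|\gamma_i^{\binom{n}{i}}\| = O(n)$. This is only linear, so a refinement is needed: one should instead choose the Hall–Petresco words for the subsequence $n=k^j$ and use the exact identity $\tau(x)=\inf\|x^n\|/n$ from Remark \ref{fekete}, together with the conclusion $\tau(g^n h^n)\leqslant$ const independent of growth. Then
\[
\tau(gh) \leqslant \frac{\|g^n\|+\|h^n\|+\|c_n\|}{n} \to 0,
\]
and $gh \in I(N)$.
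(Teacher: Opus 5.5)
\textbf{There is a genuine gap in the product step.} The paper gives no proof of this statement; it quotes Conner [Con98, Theorem 3.5], so your argument has to stand on its own.

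The easy parts are fine. $\tau$ is conjugation invariant and symmetric. Your proof of $\sqrt{N'}\subseteq I(N)$ also works: reduce to a finitely generated $M$ with $g\in\gamma_2(M)$, then use $|g^n|_S=O(n^{1/2})$, $\|\cdot\|\leqslant C|\cdot|_S$ and $\tau(g^k)=k\tau(g)$. It does lean on a nontrivial distortion theorem, and your first ``downward'' sketch is circular, since it presupposes $\tau=0$ on the non-central $\gamma_{c-1}(N)$.

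The gap is closure of $I(N)$ under products, which is the real content of the theorem. You need $\|c_n\|=o(n)$ for the Hall--Petresco correction. Your only tool is word length in $\{g,h\}$, whose constants depend on $\|g\|,\|h\|$ and never use the hypothesis $\tau(g)=\tau(h)=0$. No estimate of that kind can succeed. In the Heisenberg group with its standard word norm, $(xy)^n=x^ny^nz^{\pm\binom n2}$ and $\|z^{\binom n2}\|$ really is of order $n$. Your proposed refinement (subsequences $n=k^j$, ``$\tau(g^nh^n)\leqslant$ const'') is not an argument, so the final inequality $\tau(gh)\leqslant(\|g^n\|+\|h^n\|+\|c_n\|)/n\to 0$ is unsupported.

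\textbf{How to repair it.} The missing idea is to make the hypothesis $\tau(g)=\tau(h)=0$ enter the argument. The cleanest route avoids Hall--Petresco: argue by induction on the nilpotency class $c$, using the central subgroup $Z=\gamma_c(N)$.

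(i) For $c\geqslant 2$, $\tau$ vanishes on $Z$. The map $(y_1,\dots,y_c)\mapsto[y_1,\dots,y_c]$ is multiplicative in each variable. Hence $[y_1,\dots,y_c]^{n^c}=[y_1^n,\dots,y_c^n]$ has norm $O(n)$, so $\tau([y_1,\dots,y_c])=0$. These elements are central and generate $Z$, so Proposition \ref{properties}(4) applies.

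(ii) Let $x\in N$ and $z\in Z$. Since $z$ is central and $\tau(z^{-1})=0$, you get $n\tau(x)=\tau(x^n)\leqslant\tau(x^nz)+\tau(z^{-1})\leqslant\|x^nz\|$. Take the infimum over $z$ and let $n\to\infty$; this gives $\tau_N(x)\leqslant\tau_{N/Z}(\pi(x))$. Together with Definition \ref{induced}, $\tau_N=\tau_{N/Z}\circ\pi$.

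(iii) Therefore $I(N)=\pi^{-1}(I(N/Z))$. Apply the induction hypothesis to $N/Z$, which has class $c-1$; the abelian base case follows from Proposition \ref{properties}(4). So $I(N)$ is a subgroup containing $\pi^{-1}((N/Z)')=N'Z=N'$. Finally $\sqrt{N'}\subseteq I(N)$ because $\tau(g^k)=k\tau(g)$.

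This handles products directly and also makes the distortion theorem unnecessary.
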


Observe that in the assumptions of Theorem \ref{conner1} the quotient group $N/I(N)$ is translation proper with respect to the induced seminorm. So from Theorem \ref{conner1} it follows immediately that a translation proper nilpotent group is virtually abelian. In \cite[Theorem 3.1]{Con00} this result was generalized to the case of linear solvable groups, in particular, polycyclic groups.

\begin{theorem}\label{conner2} Let $G$ be a linear solvable group, for example, polycyclic group. Then if $G$ is translation proper (respectively, translation discrete), then $G$ is virtually metabelian (respectively, virtually abelian).
\end{theorem}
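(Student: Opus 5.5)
We would reduce to the nilpotent case via Mal'cev's structure theorem and then control the action of the abelian quotient on the nilpotent part through distortion estimates. Two preliminary observations are used throughout. First, if $G_1\subset G$ has finite index and $||.||$ is a norm on $G$, then its restriction is a norm on $G_1$ with the same $\tau$. Moreover, an infinite order $g\in G$ has $g^m\in G_1$ for some $m\geq 1$, and $\tau(g^m)=m\tau(g)$ by Proposition \ref{properties}(3). So $G$ is translation proper (resp. discrete) if and only if $G_1$ is. Second, by Theorem \ref{maltsevlin} we may therefore assume that $G$ is an extension $1\to N\to G\to A\to 1$ with $N$ nilpotent and $A$ abelian.

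\textbf{Proper case.} Restrict $||.||$ to $N$. By Theorem \ref{conner1}, every element of $\sqrt{N'}$ has $\tau=0$, so properness forces $\sqrt{N'}$, and hence $N'$, to consist of torsion elements. In a nilpotent linear group the torsion elements form a subgroup, and for $G$ polycyclic this subgroup is finite. (For a general linear solvable group we would instead use that the torsion part of a linear nilpotent group is well behaved after passing to finite index.) Thus $N'$ is finite. Then $N$ is virtually abelian: the centralizer of the finite normal subgroup $N'$ has finite index, and it is nilpotent of class at most $2$ with finite commutator subgroup, so its center has finite index. Choose a characteristic abelian subgroup $N_0\subset N$ of finite index. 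Then $G/N_0$ is finite-by-abelian, and an argument exactly as in Lemma \ref{virt} yields a finite-index subgroup of $G$ that is an extension of $N_0$ by an abelian group. This subgroup is metabelian.

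\textbf{Discrete case.} Keep the normal abelian subgroup $N_0$ and the abelian quotient. Let $g\in G$ act on $V=N_0\otimes\mathbb{Q}$ by conjugation, and suppose this action has an eigenvalue of modulus $\neq 1$. In the polycyclic case $N_0\cong\mathbb{Z}^r$ up to finite index, and the action is by an integer matrix. Replacing $g$ by $g^{-1}$ if necessary, there is an eigenvalue of modulus $>1$. Writing $x^{M}$, with $M$ of size roughly $|\lambda|^n$, as a product of boundedly many conjugates $g^{j}y g^{-j}$ with $|j|\leq n$, we get $||x^M||=O(n)$, hence $\tau(x)=0$ for some infinite order $x\in N_0$. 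This contradicts properness. In the general linear case, non-integral eigenvalues instead produce ``roots'' of $x$ (as in $BS(1,2)$), i.e. elements $y$ with $y^{q^n}=x$. Then $\tau(y)=\tau(x)/q^n\to 0$ with $\tau(y)>0$, which contradicts discreteness.

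Consequently all eigenvalues have modulus $1$. For integer matrices, Kronecker's theorem makes them roots of unity. In the general case, the same ``roots'' argument combined with discreteness excludes non-torsion eigenvalues. Passing to a finite-index subgroup, the action of $G/N_0$ on $V$ becomes unipotent, so $G$ is virtually nilpotent. Applying Theorem \ref{conner1} once more, together with the observation that a translation proper nilpotent group is virtually abelian, shows that $G$ is virtually abelian.

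The main obstacle is this eigenvalue analysis. It requires exhibiting explicit distortion, namely $||x^{M}||=O(\log M)$ or roots of arbitrarily high order, from a non-unimodular eigenvalue. It also requires handling the non-polycyclic linear case, where $N_0$ need not be finitely generated. There I would first reduce to the polycyclic situation, which is the only case used in this paper, by working with a finitely generated subgroup containing $g$ and $x$.
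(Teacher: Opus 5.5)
The paper does not prove this statement; it quotes it from Conner. Your argument therefore has to stand on its own. For polycyclic $G$, your reduction and your translation proper case are fine.

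\textbf{The gap is in the discrete case.} You claim that an eigenvalue of modulus $\neq 1$ of $g$ on $N_0\otimes\mathbb{Q}$ produces an infinite order $x\in N_0$ with $\tau(x)=0$, contradicting \emph{properness}. This is false. Here is a counterexample.
Let $\phi\in\mathrm{GL}_4(\mathbb{Z})$ be the companion matrix of the irreducible Salem polynomial $t^4-t^3-t^2-t+1$. Its roots are $\lambda\approx 1.722$, $\lambda^{-1}$ and $e^{\pm i\theta}$. Let $E\subset\mathbb{R}^4$ be the invariant plane belonging to $e^{\pm i\theta}$, let $\pi\colon\mathbb{R}^4\to E$ be the equivariant projection along the plane of $\lambda^{\pm1}$, and let $|\cdot|$ be a $\phi$-invariant Euclidean norm on $E$.
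On $G=\mathbb{Z}^4\rtimes_\phi\langle t\rangle$ put $\|vt^k\|=|\pi(v)|+|k|$. This equals $d(0,g\cdot 0)+|k|$ for the isometric action of $G$ on $E$ in which $v$ translates by $\pi(v)$ and $t$ acts by $\phi|_E$, so it is a seminorm. It is a norm because $\ker\pi$ contains no nonzero rational vector, by irreducibility of the quartic.
Then $\tau(v)=|\pi(v)|>0$ for $v\neq 0$, and $\tau(vt^k)\geqslant |k|$. So $G$ is translation proper even though $\phi$ has the eigenvalue $\lambda>1$. In particular, no element of $\mathbb{Z}^4$ is distorted, and your expression of $x^M$ through boundedly many conjugates $g^jyg^{-j}$ with $|j|\leqslant n$ cannot exist. (This $G$ is metabelian but not virtually abelian, which is exactly why the proper half of the theorem only yields ``metabelian''.)
What is true is that $\tau$ vanishes on $N_0\cap V_{\neq 1}$, where $V_{\neq1}$ is the sum of the generalized eigenspaces with $|\mu|\neq 1$. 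But this intersection can be $0$ when an irreducible factor of the characteristic polynomial has roots both on and off the unit circle, as in this example.

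\textbf{The repair must use discreteness.} Write $\phi$ for the action of $g$ and take $N_0\cong\mathbb{Z}^r$. By Proposition \ref{properties}, $\tau|_{N_0}$ is homogeneous, subadditive and bounded by $C\sum_i|v_i|$. Hence it extends to a continuous, $\phi$-invariant seminorm $\bar\tau$ on $N_0\otimes\mathbb{R}$. Invariance gives $\bar\tau(v)=\bar\tau(\phi^{\mp n}v)\to 0$ on $V_{\neq1}$, so $\bar\tau$ factors through $(N_0\otimes\mathbb{R})/V_{\neq1}$. Now suppose $V_{\neq1}\neq 0$.
\begin{itemize}
\item If $V_{\neq1}$ meets $N_0$ nontrivially, then $\tau=0$ on those points.
\item Otherwise $N_0$ maps injectively onto a rank $r$ subgroup of a space of dimension $<r$. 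That subgroup is not discrete, so $\tau$ takes arbitrarily small positive values on non-torsion elements.
\end{itemize}
Either way discreteness fails. After this, your Kronecker, unipotence and Theorem \ref{conner1} steps go through.

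\textbf{The general linear case.} Your sketch for arbitrary linear solvable groups cannot be completed as stated. The Heisenberg group $U_3(\mathbb{F}_p[t])$ is a nilpotent torsion group, linear over $\mathbb{F}_p(t)$, and vacuously translation discrete. Yet it is not virtually abelian: any finite index subgroup contains elementary matrices $e_{12}(a)$ and $e_{23}(b)$ with $ab\neq 0$, whose commutator is $e_{13}(ab)\neq 1$. So torsion is not ``well behaved'' after passing to finite index, and a hypothesis such as polycyclicity (the only case the paper uses) is genuinely needed. Passing to finitely generated subgroups also does not land you in the polycyclic class; $BS(1,2)$ is an example.
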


In what follows we consider stable seminorms induced by actions of groups on median graphs defined in section 2. We state an elementary lemma regarding actions of groups on median graphs by isometries with finite orbits, see \cite[Corollaries 4.1.3 and 4.1.10]{GenBook} for proofs.

\begin{lemma}\label{fixedptlemma} Let $G$ be a group acting by isometries on a median graph $\Gamma$. Suppose that there exists a vertex $v \in \Gamma$ such that the orbit $G\cdot v$ is finite. Then the action of $G$ on $\Gamma$ stabilizes a cube. Moreover, if $G$ acts by orientation-preserving isometries, then the action of $G$ fixes a vertex of $\Gamma$.
\end{lemma}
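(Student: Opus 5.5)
The plan is to pass from a finite orbit to a finite invariant convex subgraph, find a fixed point in the geometric realization by a circumcenter argument, and then use the orientation to pick out a distinguished vertex of the stabilized cube.

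\emph{Step 1: a finite invariant convex subgraph.} Let $F = G\cdot v$, which is a finite set of vertices. I will consider its convex hull $\mathrm{Hull}(F)$, the smallest convex subgraph of $\Gamma$ containing $F$. In a median graph a convex subgraph is determined by the halfspaces containing it, so $\mathrm{Hull}(F)$ is the intersection of all halfspaces containing $F$. Only the finitely many hyperplanes that separate two points of $F$ are relevant. Hence every vertex of $\mathrm{Hull}(F)$ is determined by a consistent choice of sides for these finitely many hyperplanes, and $\mathrm{Hull}(F)$ is finite. It is itself a median graph, and it is $G$-invariant because $F$ is $G$-invariant and $G$ acts by isometries, which send convex sets to convex sets.

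\emph{Step 2: a stabilized cube.} Let $K$ be the CAT(0) cube complex obtained by filling in all cubes of the finite median graph $\mathrm{Hull}(F)$. Then $K$ is a compact, hence complete, CAT(0) space on which $G$ acts by isometries through cellular automorphisms. By the Bruhat--Tits fixed point theorem, the circumcenter $x$ of the bounded set $F \subset K$ is unique and therefore fixed by $G$. The point $x$ lies in the interior of a unique cube $C$ of $K$, meaning the minimal cube containing $x$. Since $G$ fixes $x$ and permutes cubes, it must stabilize $C$, and $C$ is a cube of $\Gamma$ as required.

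\emph{Step 3: orientation-preserving case.} For the orientation statement I will use the fact that the orientations of the graphs in question are hyperplane-consistent: parallel edges of a square, that is, edges crossing the same hyperplane, are oriented the same way. For $\mathcal{C}^l(X)$ this holds because such edges correspond to removing the same codimension $l+1$ subvariety. In a cube with hyperplane-consistent orientation, each of its hyperplanes has a chosen side. The vertex lying on the tail side of every hyperplane of the cube is the unique source, that is, the unique vertex from which all cube edges point outward. An orientation-preserving isometry stabilizing $C$ permutes the vertices of $C$ and must send this source to a source, hence fixes it. So $G$ fixes a vertex.

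The main obstacle is the precise meaning of ``orientation-preserving''. Step 3 needs hyperplane-consistency of the orientation. I will state this explicitly as part of the notion of orientation being preserved, or verify it directly for $\mathcal{C}^l(X)$ from Definition \ref{medgraphs}. Steps 1 and 2 are standard median-graph and CAT(0) facts.
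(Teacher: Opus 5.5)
The paper gives no proof of this lemma; it only cites Genevois' book (Corollaries 4.1.3 and 4.1.10) for both assertions. Your argument is correct and replaces that citation with a self-contained geometric proof, so it is a different route. You pass to the convex hull of the finite orbit, which is a finite $G$-invariant median graph. You fill in its cubes to get a compact CAT(0) cube complex, take the Bruhat--Tits circumcenter of the orbit, and let $C$ be the cube containing it in its relative interior. Taking the finite hull first is a good move. It makes completeness automatic and avoids discussing the metric on the cube completion of the whole graph, which is infinite-dimensional for $\mathcal{C}^l(X)$ and for the blow-up complex. The cost is that you import the correspondence between median graphs and CAT(0) cube complexes and the circumcenter lemma, whereas the cited statements live entirely in median-graph language.

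Your Step 3 also usefully makes explicit a hypothesis the paper leaves implicit. For an arbitrary orientation of the edges, the second assertion is false. Take a $4$-cycle (a square, hence a median graph) oriented cyclically: the quarter-turn rotation preserves the orientation and has finite orbits, but fixes no vertex. What is needed is exactly what you use: edges dual to the same hyperplane are oriented the same way, so a stabilized cube has a unique source. It would also suffice that no element of $G$ exchanges the two halfspaces of a hyperplane it stabilizes.

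For $\mathcal{C}^l(X)$, this consistency follows from Lonjou--Urech's description of hyperplanes by codimension-$(l+1)$ subvarieties. Each edge dual to a given hyperplane goes from a vertex where that subvariety is present to one where it has been removed. The analogous statement holds for the blow-up complex, with points of the bubble space in place of subvarieties. Once you have checked this, your source-vertex argument covers every use of the lemma in the paper.
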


Now we state a theorem (see \cite[Theorem 11.2.6]{GenBook} and \cite[Theorem 5.11]{Gen22}) describing general actions of polycyclic groups on median graphs. See \cite[Definition 3.1]{Gen22} for the definition of a {\em median flat} in a median graph. In the case we consider, all isometries will be orientation-preserving; therefore, periodic isometries always fix a vertex (see \cite[Corollary 4.1.10]{GenBook}).

\begin{theorem}\label{genevois} Let $G$ be a polycyclic group acting on a median graph (or a $\mathrm{CAT}(0)$ cube complex) $\Gamma$. Then there is a finite index subgroup $H \subset G$ such that \begin{itemize} \item The subset $$E = \{g \in H \mid \mbox{$g$ acts on $\Gamma$ by a periodic isometry}\}$$ is a normal subgroup of $H$, and the quotient $H/E$ is a free abelian group; \item the subgroup $E$ stabilizes a median flat or a single vertex.\end{itemize}
\end{theorem}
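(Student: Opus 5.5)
The statement is a result of Genevois that the paper only quotes. If I had to reprove it, I would induct on the Hirsch number $h(G)$ (Proposition \ref{hirsch}). The organising idea is that the loxodromic part of the action should be read off from a homomorphism to a free abelian group given by translation lengths, and the kernel of that homomorphism is the set of periodic elements. All the preliminary reductions are by passing to finite index subgroups.

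The first reduction is to the case where $G$ is poly-infinite-cyclic (Proposition \ref{hirsch}) and acts without hyperplane inversions. Without inversions every element is either elliptic or combinatorially loxodromic, as in Theorem \ref{lonjouurech}(3), and periodic isometries fix a vertex (Lemma \ref{fixedptlemma}). The next step is the abelian case $G\cong\mathbb{Z}^n$. Commuting isometries preserve each other's minimal sets. If each generator in turn is restricted to the minimal set of the previous ones, one obtains a convex $G$-invariant subcomplex on which $G$ acts by translations. This subcomplex should be a median flat, a product of lines and bounded pieces. Comparing the translation parts along hyperplane directions gives a homomorphism $\theta\colon G\to\mathbb{Z}^m$. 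Its kernel $E$ consists exactly of the elements acting with bounded orbits. Since $E$ is finitely generated with bounded orbits, Lemma \ref{fixedptlemma} gives a fixed vertex or an invariant cube. The quotient $G/E$ embeds in $\mathbb{Z}^m$ and is therefore free abelian.

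For general polycyclic $G$, I would use Theorem \ref{maltsev} to pass to a finite index subgroup with a normal nilpotent subgroup $N$ and abelian quotient. I would then take a nontrivial normal free abelian subgroup $A$, for instance the last nontrivial term of the derived series of $N$ after a further finite index passage. The abelian case, applied inductively, gives $A$ a canonical invariant object: its fixed-point set, or the union of its minimal flats. Because this object is canonical, it is preserved by the normaliser of $A$. The induced action of $G/A$, or an analysis of the action of $G$ on this object, then falls under the induction hypothesis.

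Finally, the translation length $\tau$ should be additive on commuting elements in the median setting. Theorem \ref{conner1} applied to $N$ then shows that $\{\tau=0\}$ contains $\sqrt{N'}$ and is a normal subgroup. Combining this with the inductive structure, I expect $E=\{g\in H:\tau(g)=0\}$ to be a normal subgroup of a finite index subgroup $H$, with $H/E$ torsion-free abelian. The second bullet should then follow from the abelian case applied to $E$ together with its normality: the canonical flat or vertex of $E$ is $H$-invariant.

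I expect the main obstacle to be showing that the periodic elements form a \emph{subgroup}. In general, products of elliptic isometries can be loxodromic, and the median graphs here may be infinite dimensional, so rank-rigidity results of Caprace--Sageev type are not available. My first attempt would be to prove that $\tau$ restricted to $H$ is a homomorphism to $\mathbb{R}$ up to finite index. To do this I would find $H$-invariant families of pairwise transverse hyperplane orbits, which are the ``median flat'' directions. Counting signed hyperplane crossings along each such family should produce homomorphisms $H\to\mathbb{Z}$ whose common kernel is exactly $E$.
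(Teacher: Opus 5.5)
The paper gives no proof of this statement. It is quoted from Genevois \cite{GenBook, Gen22}, where it rests on a structure theory for median sets of isometries and median flats. Your sketch therefore has to stand on its own, and at the decisive point it does not.

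The first bullet says that, after passing to a finite-index subgroup $H$, the periodic elements form a normal subgroup with free abelian quotient. You only announce this (``I expect'', ``my first attempt would be''), and neither mechanism you propose delivers it.

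\begin{itemize}
\item Theorem \ref{conner1} applied to the nilpotent group $N$ only shows that $\{\tau=0\}\cap N$ is a normal subgroup of $N$. It says nothing about elements of $H\setminus N$, which is exactly where closure under products can fail. For example, the Klein bottle group $\langle a,b\mid bab^{-1}=a^{-1}\rangle$ acts without inversions on the simplicial line by $a\colon x\mapsto x+2$ and $b\colon x\mapsto -x$. Both $b$ and $ab$ fix a vertex, yet $(ab)b^{-1}=a$ is loxodromic. Ellipticity becomes closed under products only in the finite-index subgroup $\langle a,b^2\rangle$, and your sketch gives no way to find such a subgroup in general. (Also, $\tau$ is only subadditive on commuting elements, not additive: $\tau(gg^{-1})=0$.)
\item The hyperplane-counting plan presupposes $H$-invariant families of pairwise transverse hyperplanes whose signed crossing numbers are homomorphisms with common kernel exactly the periodic elements. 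That is essentially a restatement of the theorem, not a route to it.
\end{itemize}

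The induction in your third paragraph is not set up either. If the normal free abelian subgroup $A$ contains loxodromic elements, then $G/A$ acts neither on $\Gamma$ nor on the union of the minimal flats of $A$, since $A$ translates them nontrivially. So there is no action of a group of smaller Hirsch number to which the induction hypothesis applies. To make this route work you would have to do three things.
\begin{itemize}
\item Factor out the periodic part of $A$ by passing to its fixed set; this part is normal in $G$ because $\tau$ is conjugation invariant.
\item Prove that, up to finite index, the minimal set of $A$ splits $G$-equivariantly as a flat factor on which $A$ translates times a factor on which $A$ acts trivially. Then $H$ maps to some $\mathbb{Z}^k$ through the first factor, while $H/A$ acts on the second.
\item Show that $g\in H$ is periodic on $\Gamma$ if and only if its translation part vanishes and it is periodic on the second factor.
\end{itemize}
This equivariant splitting is the missing idea, and it is what Genevois's analysis of median sets and median flats is designed to supply.

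Even your abelian case needs repair.
\begin{itemize}
\item Displacement-minimizing sets for the combinatorial metric need not be convex or even connected: the isometry of a square exchanging two opposite vertices fixes only the other two.
\item The zero-translation factor need not be bounded, contrary to ``a product of lines and bounded pieces''.
\item Knowing that each element of $E$ has bounded orbits does not give a bounded $E$-orbit. So Lemma \ref{fixedptlemma} does not apply directly, and you need an argument like Proposition \ref{fixedpt}.
\end{itemize}
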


Suppose that a group $G$ acts on a median graph $\Gamma$ such that every element of $G$ fixes a vertex of $\Gamma$. In general, it is not true that there exists a vertex of $\Gamma$ fixed by the entire group $G$. Hence, we will need the following simple proposition regarding such actions of virtually polycyclic groups.

\begin{proposition}\label{fixedpt} Let $G$ be a virtually polycyclic group. Suppose that $G$ acts on a median graph $\Gamma$ in such a way that all elements of $G$ act by elliptic and orientation-preserving isometries. Then the action of $G$ fixes a vertex of $\Gamma$.
\end{proposition}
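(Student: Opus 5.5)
Since the hypotheses pass to finite-index subgroups, I would first replace $G$ by a finite-index polycyclic subgroup $G_0$, and then by the finite-index subgroup $H \subset G_0$ given by Theorem \ref{genevois}. The point of this reduction is that a fixed vertex for a finite-index subgroup is enough. Indeed, if $H$ fixes a vertex $v$, then the orbit $G\cdot v$ is finite, with at most $[G:H]$ elements. By Lemma \ref{fixedptlemma}, and because the action is orientation-preserving, $G$ then fixes a vertex of $\Gamma$. So it suffices to produce a vertex fixed by $H$.

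Next I would use the structure given by Theorem \ref{genevois}. Every element of $G$ acts elliptically by hypothesis, so every element of $H$ fixes a vertex. In particular every element of $H$ acts periodically, since fixing a vertex means stabilizing a $0$-dimensional cube. Hence $E = H$ in the notation of Theorem \ref{genevois}. The theorem then says that $H$ stabilizes either a single vertex or a median flat $F \subset \Gamma$. In the first case we are done.

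The main step is to rule out, or exploit, the case where $H$ stabilizes a median flat $F$. I would use the fact that a median flat is (the $1$-skeleton of) a product of finitely many lines, i.e.\ a finite-dimensional cubulation of $\mathbb{R}^n$ of the form $\mathbb{Z}^n$ with a product structure. Its isometry group contains, with finite index, the group of translations $\mathbb{Z}^n$, permuted by a finite group of factor permutations and reflections. Restricting the action of $H$ to $F$ gives a homomorphism $H \to \mathrm{Isom}(F)$. I would pass to the finite-index subgroup $H'$ that preserves each factor and its orientation; this is legitimate since all isometries are orientation-preserving. Then $H'$ acts on $F$ by translations. An element acting on $F$ by a nontrivial translation would have positive translation length. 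That contradicts ellipticity, since translation length in $\Gamma$ is at most the translation length in the convex (gated) subgraph $F$ and the two agree for a convex invariant subgraph. So $H'$ acts trivially on $F$ and fixes every vertex of $F$. Then $H'$ has a fixed vertex, and the first paragraph finishes the argument.

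The obstacle is precisely this flat case: one must justify that $\mathrm{Isom}(F)$ is virtually a translation group, and that ellipticity in $\Gamma$ forces ellipticity in $F$. For the latter, the tool I would try first is the gate (nearest-point) projection onto the convex subgraph $F$. This projection is $H$-equivariant, so a fixed vertex of $g$ in $\Gamma$ projects to a fixed vertex of $g$ in $F$. A translation of $\mathbb{Z}^n$ with a fixed point is trivial, so $g$ acts trivially on $F$.
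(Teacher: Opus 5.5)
Your argument is correct, but it takes a different route from the paper. The paper does not use Theorem~\ref{genevois} for this proposition. It passes to a poly-infinite-cyclic subgroup and inducts on the Hirsch length. If $G_1 \lhd G$ with $G/G_1 = \langle tG_1\rangle \cong \mathbb{Z}$, and $G_1$ fixes $v_0$ by induction, then $Gv_0 = \{t^n v_0\}$. Since $t$ fixes some vertex $w$, all these points lie at distance $d(v_0,w)$ from $w$, and Lemma~\ref{fixedptlemma} then yields a vertex fixed by $G$.

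You instead note that ellipticity of every element forces $E = H$ in Theorem~\ref{genevois}. You then handle the median-flat alternative by projecting fixed vertices onto $F$ and using that $\mathrm{Aut}(F)$ is virtually the translation group $\mathbb{Z}^n$.

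What each route buys:
\begin{itemize}
\item The paper's induction is elementary and self-contained. It needs only the fixed point lemma and a cyclic series, and no knowledge of median flats or gate projections.
\item In the paper's inductive step, however, the orbit $\{t^nv_0\}$ is a priori only bounded, not finite. The graph $\mathcal{C}^l(X)$ is locally infinite, and $t$ can move $v_0$ through infinitely many vertices of a sphere around $w$. So that step really relies on the bounded-orbit form of Lemma~\ref{fixedptlemma}.
\item Your route applies Lemma~\ref{fixedptlemma} only to a genuinely finite orbit, coming from a finite-index subgroup. The price is invoking the much deeper Theorem~\ref{genevois} (which the paper uses anyway for Theorem~\ref{main2}), together with the definition of a median flat from \cite{Gen22}, which the paper never states.
\end{itemize}

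Two small corrections to your flat case:
\begin{itemize}
\item $H'$ has finite index simply because translations have index $2^n n!$ in $\mathrm{Aut}(F)\cong D_\infty \wr S_n$. Orientation-preservation plays no role here; for example, it does not prevent swapping or reversing factors.
\item Your gate-projection step uses that $F$ is a convex subgraph. If one only knows that $F$ is a median subalgebra isomorphic to $\mathbb{Z}^n$, argue instead as follows. An elliptic element has bounded orbits in $\Gamma$, and $d_\Gamma$ restricted to $F$ dominates the $\ell^1$ metric of $F$, so its translation vector must vanish.
\end{itemize}
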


\begin{proof} First, it suffices to prove the statement for a finite index subgroup $H \subset G$. Indeed, suppose that $H$ fixes a vertex $v_0 \in \Gamma$. Then for each $g \in G$ the vertex $g\cdot v_0$ depends only on the coset $gH \subset G$. Since $H \subset G$ is of finite index, the orbit $Gv_0$ of the vertex $v_0$ under $G$ is finite. Therefore, the action of $G$ on $\Gamma$ has a bounded orbit and preserves orientation, hence, by Lemma \ref{fixedptlemma}, $G$ fixes a vertex of $\Gamma$.

Hence, by Proposition \ref{polycprop} we may assume $G$ to be poly-infinite-cyclic. The proof is by induction on the Hirsch length of $G$. The induction base is clear; suppose that the Hirsch length of $G$ is equal to $k$. Let $G_1$ be a normal subgroup of $G$ such that $G/G_1 \cong \mathbb{Z}$ and $h(G_1) = k-1$. Then, as before, for $g \in G$ the vertex $g\cdot v_0$ depends only on $gG_1 \subset G$. The subgroup $G_1$ fixes a vertex $v_0 \in \Gamma$ by induction hypothesis. Moreover, all elements of $G$ act by elliptic isometries, so the orbit $Gv_0$ is finite. Therefore, the action of $G$ on $\Gamma$ has a bounded orbit and preserves orientation, so by Lemma \ref{fixedptlemma} $G$ fixes a vertex, as desired. 
\end{proof}

\section{Proofs of the main results}

\begin{proof}[Proof of Theorem \ref{main}] Passing to a subgroup of finite index, one may assume $G$ to be polycyclic. Then by Theorem \ref{maltsev}, up to further replacing $G$ by a finite index subgroup, the group $G$ is an extension $$1 \to N \to G \to A \to 1,$$ where $N$ is a finitely generated nilpotent group and $A$ is a finitely generated abelian group. By Lemma \ref{virt} one has $$\vdl(G) \leqslant \vdl(N) + 1,$$ therefore, it suffices to show that $\vdl(N) \leqslant 2\dim(X)$. 

Let us look at the action of $N \subset \Bir(X)$ by isometries on the median graph $\mathcal{C}^0(X)$. By Theorem \ref{lonjouurech}, the translation length function of this action is equal to $2\nu^1$. Let us denote $$H_1 = I(N, \nu^1) = \{g \in N \mid \nu^1(g) = 0\},$$ which is a normal subgroup of $N$ containing $N' = N^{(1)}$, according to Theorem \ref{conner1}. Since all elements of $H_1$ act on $\mathcal{C}^0(X)$ by elliptic isometries, by Proposition \ref{fixedpt} the action of $H_1$ on $\mathcal{C}^0(X)$ has a common fixed vertex $v_1 = [(X_1, \varphi_1)]$. So, Theorem \ref{lureg2} yields that the subgroup $H_1$ is pseudo-regularizable in codimension 1: there exists a birational map $$\varphi \colon X_1 \dasharrow X$$ such that $\varphi^{-1}\circ H_1\circ\varphi \subset \Psaut^1(X_1)$. Therefore, one can consider the action of the subgroup $H_1 \subset \Psaut^1(X_1)$ on the median graph $\mathcal{C}^1(X_1)$ with the translation length function $2\nu^2$. Again, let us denote $$H_2 = I(H_1, \nu_2) = \{g \in H_1 \mid \nu^2(g) = 0\}.$$ Repeating the above argument, we conclude that the action of $H_2$ on $\mathcal{C}^1(X_1)$ has a fixed vertex, and therefore, $H_2$ is pseudo-regularizable in codimension 2. We continue in the same way and obtain a sequence of normal subgroups $$N = H_0 \rhd H_1 \rhd H_2 \rhd \cdots \rhd H_d,$$ such that for all $k \in \{0, \ldots, \dim(X)-1\}$: \begin{itemize} \item The subgroup $H_k$ is pseudo-regularizable in codimension $k$ on some algebraic variety $X_k$ birational to $X$; \item The quotient of $H_{k-1}$ by $H_k = I(H_{k-1}, \nu^k)$ admits a positive and integer-valued stable norm, induced by $\nu^k$. Hence, by Theorem \ref{conner2}, the quotient group $H_{k-1}/H_k$ is virtually abelian. Therefore, by another application of Theorem \ref{conner1}, one has $$H_{k} \supset (H_{k-1})' \supset (N^{(k-1)})' = N^{(k)}.$$
\end{itemize}

In particular, the subgroup $H_d \subset N$ is regularizable on an algebraic variety $X_d$. Moreover, by Lemma \ref{quasipr} we may assume $X_d$ to be quasi-projective. So by Abboud's Theorem \ref{abboud}, the virtual derived length of $H_d$ is at most $d = \dim(X_d) = \dim(X)$. From repeated application of Lemma \ref{virt} it follows that $\mathrm{vdl}(N)$ is at most $2d$. Therefore, one has $\vdl(G) \leqslant 2\dim(X) + 1$, which completes the proof of Theorem \ref{main}. \end{proof}

\begin{remark}\label{optimal} It is not clear whether the bound $\vdl(G) = 2\dim(X)+1$ can be optimal for some values of $d = \dim(X) \geqslant 3$. It would be instructive to find the optimal bound for the Cremona group of rank 3.
\end{remark}

\begin{proof}[Proof of Theorem \ref{main2}] We may assume $G$ to be polycyclic. Let us consider the action of $G \subset \Bir(S)$ by isometries on the blow-up complex (see Remark \ref{blowup}). From Theorem \ref{genevois} it follows that there exists a finite-index subgroup $G_1 \subset G$ which is an extension $$1 \to H_1 \to G_1 \to A_1 \to 1,$$ where $H_1 = I(G_1, \nu) \lhd G_1$ acts by elliptic isometries (hence, by Lemma \ref{fixedpt} fixes a vertex), and the quotient group $A_1$ is free abelian of finite rank. Since $G$ is finitely generated, one may choose a finite-index subgroup $\overline{G}_1 \subset G$ contained in $G_1$ and characteristic in $G$ (in particular, normal in $G_1$). Then one has the exact sequence $$1 \to H_1\cap \overline{G}_1 \to \overline{G}_1 \to \overline{A} \to 1,$$ where $\overline{A}$ is finitely generated abelian and $H_1\cap\overline{G}_1$ fixes a vertex. On the other hand, applying Theorem \ref{maltsev} to the polycyclic group $\overline{G_1}$, one obtains a finite index subgroup $\widetilde G_1 \subset \overline{G}_1$ and the sequence $$1 \to N \to \widetilde{G}_1 \to A_1 \to 1,$$ where $N$ is finitely generated nilpotent and $A_1$ is finitely generated abelian. As before, we may assume the subgroup $\widetilde{G}_1$ to be characteristic in $\overline{G}_1$. Now one has the product homomorphism $\widetilde{G}_1 \to \overline{A}\times A_1$ obtained from the above homomorphisms $\widetilde{G}_1 \to \overline{A}$ and $\widetilde{G}_1 \to A_1$. Its image is abelian and its  kernel is equal to $H_1\cap \widetilde{G}_1\cap N$ which is a finitely generated nilpotent and projectively regularizable subgroup of $\Bir(S)$. Hence, by Lemma \ref{virt} and Theorem \ref{abboud} one has $$\vdl(G) = \vdl(\overline{G}_1) \leqslant \vdl(H_1\cap \overline{G}_1\cap N) + 1 \leqslant 2 + 1 = 3,$$ as it was to be shown. By Remark \ref{autopt}, this bound is optimal.
\end{proof}

\begin{remark}\label{cremona2} In the case $X = \PP^2$ it follows from Theorems \ref{deserti} and \ref{maltsev} together with \cite[Theorem 5]{KKMM09} that the virtual derived length of a linear solvable subgroup $G \subset \mathrm{Cr}_2(\mathbb{C})$ does not exceed 4. 
\end{remark}

\medskip

\flushleft{\address{National Research University Higher School of Economics,\\
International Laboratory of Mirror Symmetry and Automorphic Forms, Faculty of Mathematics, \\
Usacheva St. 6, Moscow 119048, Russia; \\
also\\
Steklov Mathematical Institute of Russian Academy of Sciences, Moscow, Russia \\
8 Gubkina St., Moscow 119991, Russia \\}}
\email{\texttt{agolota@hse.ru}}

\end{document}